\def\col{\colon\thinspace}
\newcommand{\C}{{\mathbb{C}}}
\newcommand{\D}{\mbox{\rm D}}
\newcommand{\R}{{\mathbb{R}}}
\newcommand{\Z}{{\mathbb{Z}}}
\newcommand{\E}{{\mathbb{E}}}
\newcommand{\HH}{{\mathbb{H}}}
\newcommand{\sltil}{\widetilde{\mathrm{SL}}_2(\mathbb{R})}
\newcommand{\rme}{\mathrm{e}}
\newcommand{\rmi}{\mathrm{i}}
\newcommand{\vol}{\text{\rm vol}\,}
\newcommand{\tr}{\text{\rm tr}\,}
\newcommand{\abs}[1]{|#1|}
\newcommand{\set}[1]{\{#1\}}
\newcommand{\fol}{\mathcal{F}}
\newcommand{\mc}[1]{\mathcal{#1}}
\newcommand{\delsn}{\partial_{s \vert_{s=0}}}
\newcommand{\restr}[2]{\left.#1\right\vert_{#2}}
\newcommand{\drm}{\mathrm{d}}
\newcommand{\dprime}{\prime \prime}
\DeclareMathOperator{\id}{\mathrm{id}}
\DeclareMathOperator*{\arccot}{arccot}
\DeclareMathOperator*{\arcoth}{arcoth}
\DeclareMathOperator{\isom}{Isom}
\DeclareMathOperator{\nil}{Nil}
\DeclareMathOperator{\rank}{rank}
\DeclareMathOperator{\Rc}{Ric}
\DeclareMathOperator{\orb}{Orb}
\definecolor{grn}{RGB}{71,181,97}
\definecolor{ora}{RGB}{252,160,0}
\definecolor{blue}{RGB}{126,157,229}
\newtheorem{thm}{Theorem}[section]
\newtheorem{lemma}[thm]{Lemma}
\newtheorem{cor}[thm]{Corollary}
\newtheorem{prop}[thm]{Proposition}
\theoremstyle{definition}
\newtheorem{defi}[thm]{Definition}
\newtheorem{rmk}[thm]{Remark}
\newtheorem{ex}[thm]{Example}
\newtheorem*{exa}{Examples}
\title[Geodesic vector fields, induced contact structures and tightness]{Geodesic vector fields, induced contact structures and tightness in dimension three}
\author{Tilman Becker}
\address{Mathematisches Institut, Universit\"at zu K\"oln, Weyertal 86-90, 50931 K\"oln, Germany}
\email{tibecker@math.uni-koeln.de}
\begin{document}

\begin{abstract}
In this paper, we provide new and simpler proofs of two theorems of Gluck and Harrison on contact structures induced by great circle or line fibrations. Furthermore, we prove that a geodesic vector field whose Jacobi tensor is parallel along flow lines (e.g.\ if the underlying manifold is locally symmetric) induces a contact structure if the `mixed' sectional curvatures are nonnegative, and if a certain nondegeneracy condition holds. Additionally, we prove that in dimension three, contact structures admitting a Reeb flow which is either periodic, isometric, or free and proper, must be universally tight. In particular, we generalise an earlier result of Geiges and the author, by showing that every contact form on $\R^3$ whose Reeb vector field spans a line fibration is necessarily tight. Furthermore, we provide a characterisation of isometric Reeb vector fields. As an application, we recover a result of Kegel and Lange on Seifert fibrations spanned by Reeb vector fields, and we classify closed contact $3$-manifolds with isometric Reeb flows (also known as $R$-contact manifolds) up to diffeomorphism.
\end{abstract}
\maketitle
\section{Introduction}
One of the notable features of Reeb flows associated with contact forms is their \textit{geodesibility}. That is, given a contact form $\alpha$ with Reeb vector field $R$, one can find a Riemannian metric for which every flow line of $R$ is a geodesic. In fact, it is not hard to describe such a metric: start with an arbitrary metric on the contact hyperplane field $\xi = \ker \alpha$. Then, set $|R| \equiv 1$ and define $R$ to be orthogonal to $\xi$. A simple computation then shows that the flow lines of $R$ are geodesics with respect to this metric (see Lemma \ref{lem:geodesiccharact} in Section \ref{section:pre}). 
Perhaps the most prominent example is the geodesic flow on the unit tangent bundle of a manifold (Example \ref{ex:geodesicvf} (i)). Its flow lines are geodesics with respect to the Sasaki metric, and the orthogonal distribution defines the tautological contact structure. 

Conversely, if a geodesible vector field $X$ is given, say on a $3$-manifold, one can ask whether or not the distribution orthogonal to $X$ defines a contact structure, i.e.\ a maximally non-integrable plane field. In this case, we speak of \textit{contact structures induced by geodesic fields} or simply \textit{induced contact structures}. Moreover, one can ask for properties of contact structure arising this way, for example tightness in dimension 3 (see Sections \ref{section:pre} and \ref{section:tightness} for definitions). In other words, we are looking for `local-global'-type statements, where a local assumption --- typically, on the curvature of the underlying manifold --- forces some global topological property of the induced contact structure.

These types of questions have been studied by several authors. It was shown by Gluck that the distribution orthogonal to a fibration of $S^3$ by oriented great circles always defines a contact structure diffeomorphic to the standard one \cite{gluck:2018}. By work of Harrison, Geiges and the author, the same is true for fibrations of $\E^3$ by oriented lines satisfying a certain `nondegeneracy' condition \cite{harrison:2019,harrison:2021,beckergeiges:2021}. A more general result was obtained by Aazami and, independently, Harris--Paternain, who proved that a geodesic vector field $X$ on a closed $3$-manifold defines a contact structure if $\Rc X > 0$ everywhere \cite{aazami:2015, harrispaternain:2016}. It seems to be unknown, however, whether or not the contact structure must be tight in that case. On the other hand, if the sectional curvature of the underlying manifold $M$ is positive and $1/4$-pinched, and if the contact structure is `strongly' compatible with the Riemannian metric (see \cite{etnyrekomendarczykmassot:2012} for definitions), then the universal cover of $M$ is a sphere, and the contact structure lifts to the standard tight one by results of Etnyre--Komendarczyk--Massot and Ge-Huang 
\cite{etnyrekomendarczykmassot:2012,gehuang:2016}. The contact structures we are interested in are generally not strongly but only weakly compatible with the Riemannian metric. On the contrary, there are weakly compatible contact structures which are not induced by a geodesic field. In other words, our notion of induced contact structures seems to sit somewhere in between weakly and strongly compatible contact structures. While the `$1/4$-pinched contact sphere theorem' is known to be false for weakly compatible metrics in general \cite{peraltasalas:2023}, there are no counterexamples yet in the case of induced contact structures. 

In the present paper, we provide new and simpler proofs of the theorems of Gluck and Harrison (Theorem \ref{thm:contgeoconst} and Corollary \ref{cor:spaceformsinducedcs} in this paper) using a notion of \textit{adapted} Jacobi fields. Using a similar line of argument, we prove a more general statement on contact structures induced by geodesic vector fields whose Jacobi tensor is parallel along flow lines (Theorem \ref{thm:locallysymmcontact}). This applies, for example, to every geodesic vector field defined on a locally symmetric space.
In Section \ref{section:tightness}, we provide several tightness criteria for contact structures in dimension three in terms of their Reeb flows. More precisely, we prove that contact structures admitting a Reeb flow which is either periodic, isometric, or free and proper, must be universally tight (Proposition \ref{prop:periodictight} and Theorems \ref{thm:freereebflow}, \ref{prop:killingtight}). This allows us to prove tightness of contact structures induced by certain geodesic fields. In particular, we generalise an earlier result of Geiges and the author (Corollary \ref{cor:linetight}). Furthermore, we give a necessary and sufficient condition for a Killing vector field to be realisable as the Reeb vector field of a contact form (Proposition \ref{prop:killingreeb}). This yields the following two applications: a proof of an earlier result of Kegel and Lange characterising Seifert fibrations which are realisable by Reeb flows (Corollary \ref{cor:kegellange} in this paper), as well as a complete classification of closed $3$-dimensional $R$-contact manifolds, i.e.\ contact manifolds admitting an isometric isometric Reeb flow (Corollary \ref{thm:classification}).

\subsection*{Acknowledgements} This article is based on and extends parts of my Ph.D. thesis, completed at the Universit\"at zu K\"oln.  I am deeply grateful to my advisor, Hansj\"org Geiges, for his continuous support throughout my Ph.D. journey. I would also like to thank Murat Sa\u{g}lam for many helpful discussions, in particular regarding the proof of Theorem \ref{prop:killingtight}.

This work is part of a project of the SFB/TRR 191 'Symplectic Structures in Geometry, Algebra and Dynamics', funded by the DFG (Projektnummer 281071066 -- TRR 191).

\section{Preliminaries}\label{section:pre}
Let $(M,g)$ be a Riemannian manifold and $X$ a unit vector field on $M$. Then $X$ is called \textbf{geodesic} if all of its flow lines are geodesics, or equivalently, if $\nabla_X X = 0$, where $\nabla$ denotes the Levi-Civita connection associated with $g$. More generally, a vector field $X$ is called \textbf{geodesible} if there exists a Riemannian metric for which $X$ is geodesic.

Let us now discuss some first examples.
\begin{exa}\label{ex:geodesicvf}~
\begin{enumerate}[(i)]
    \item (Geodesics flows). Let $(M,g)$ be a Riemannian manifold and $(STM,g_S)$ its unit tangent bundle equipped with the Sasaki metric $g_S$, which can be described as follows. Given two unit vector fields $X$, $Y$ along curves $\gamma_X, \gamma_Y$, set
    \[
g_S(\dot{X},\dot{Y}) := g(\dot{\gamma}_X,\dot{\gamma}_Y) + g(D_t^{\gamma_X} X, D_t^{\gamma_Y} Y),
\]
where $D_t^{\gamma_X}$ and $D_t^{\gamma_Y}$ are the covariant derivatives along $\gamma_X$ and $\gamma_Y$, respectively. Now, consider the geodesic flow
\[
G \col U \times (-\varepsilon,\varepsilon) \longrightarrow STM, \quad (v,t) \longmapsto \dot{\gamma}_v(t),
\]
where $U \subset STM$ is an open, relatively compact set, and $\gamma_v$ is the unique geodesic satisfying the initial conditions $\gamma_v(0) = \pi(v)$ and $\dot{\gamma_v}(0) = v$ (here, $\pi \col STM \to M$ is the natural projection). Now let $Y$ be a vector field along a curve $\gamma$, and denote by $L(Y)$ and $L(\gamma)$ the lengths of $Y$ and $\gamma$ as curves in $STM$ and $M$, respectively. Then
\[
L(Y)  
= \int_{-\delta}^{\delta} \sqrt{|\dot{\gamma}(t)|^2_g + |D_t^{\gamma} Y|^2_g} \, \drm t \geq \int_{-\delta}^{\delta} |\dot{\gamma}(t)|_g \, \drm t = L(\gamma).
\]
with equality if and only if $D_t^{\gamma} Y \equiv 0$. If $Y$ is a flow line of $G$, then $D_t^{\gamma} Y = D_t^{\gamma} \dot{\gamma} = 0$, hence $Y$ is locally length-minimising and therefore a geodesic. Thus, $G$ is a geodesic vector field on $(STM,g_S)$.
\item (Killing flows). Let $X$ be a unit length Killing vector field on $(M,g)$, that is, the flow of $X$ defines a 1-parameter family of isometries of $M$. In particular, it preserves the orthogonal hyperplane field $X^\perp$. Hence, by Lemma \ref{lem:geodesiccharact} below, $X$ is geodesic. If $X$ is not of unit length, then after replacing $g$ by $(1/|X|^2) g$, one can compute easily that $X$ is still Killing for this new metric (see \cite[Lemma 3.1]{wadsley:1975}), hence geodesic. It follows that every Killing vector field is geodesible for a Riemannian metric conformally equivalent to the given one.
\item (Periodic flows). Consider a vector field $X$ whose flow defines an $S^1$-action on some manifold $M$. Start with an arbitrary metric $g$, then replace $g$ by 
\[
\tilde{g} = \int_{S^1} (L_{\rho})^*g \, \drm \mu,
\]
where $L_{\rho}$ denotes left multiplication by $\rho \in S^1$, and $\drm \mu$ is a Haar measure on $S^1$. This new metric is being preserved by the flow of $X$, hence $X$ is Killing and therefore geodesible by Example (ii) above.
\end{enumerate}
\end{exa}
The following characterisations were first observed by Wadsley \cite{wadsley:1975} and Sullivan \cite{sullivan:1978}.
\begin{lemma}\label{lem:geodesiccharact}
Let $X$ be a unit vector field on a Riemannian manifold $(M,g)$ with dual 1-form $\alpha := i_X g$. Then, the following are equivalent.
\begin{itemize}
    \item[(i)] $X$ is geodesic;
    \item[(ii)] the flow of $X$ preserves the orthogonal distribution $X^\perp := \ker \alpha$;
    \item[(iii)] $\alpha(X) = 1$ and $i_X \drm \alpha = 0$.
\end{itemize}
\end{lemma}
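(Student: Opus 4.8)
The plan is to begin with the trivial but useful observation that, since $X$ is a unit vector field, $\alpha(X) = i_Xg(X) = g(X,X) = 1$ \emph{automatically}, at every point. Hence the condition ``$\alpha(X) = 1$'' in (iii) carries no information, and (iii) is just the assertion $i_X\,\drm\alpha = 0$. The whole lemma then reduces to two identifications: first, that $i_X\,\drm\alpha$ is the metric dual of the acceleration $\nabla_X X$; and second, that $i_X\,\drm\alpha$ equals the Lie derivative $L_X\alpha$, which governs whether the flow of $X$ preserves $\ker\alpha$.

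For the equivalence (i) $\Leftrightarrow$ (iii), I would compute $\drm\alpha$ using that $\nabla$ is torsion-free and metric, which gives $\drm\alpha(Y,Z) = g(\nabla_Y X, Z) - g(\nabla_Z X, Y)$. Setting $Y = X$ yields $(i_X\,\drm\alpha)(Z) = g(\nabla_X X, Z) - g(\nabla_Z X, X)$, and the last term vanishes because $g(\nabla_Z X, X) = \tfrac12 Z\big(g(X,X)\big) = \tfrac12 Z(1) = 0$. Therefore $i_X\,\drm\alpha = g(\nabla_X X, \sdot\,)$, and since $g$ is nondegenerate this 1-form vanishes if and only if $\nabla_X X = 0$. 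This settles (i) $\Leftrightarrow$ (iii); as a byproduct it records that $\nabla_X X$ is always orthogonal to $X$, i.e.\ the acceleration of a flow line lives in $X^\perp$, which is the natural reason the argument works.

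For the equivalence with (ii), I would invoke Cartan's formula: $L_X\alpha = \drm(i_X\alpha) + i_X\,\drm\alpha = \drm(1) + i_X\,\drm\alpha = i_X\,\drm\alpha$. Now I would use the standard fact that the flow $\phi^t$ of $X$ preserves the hyperplane field $\ker\alpha = X^\perp$ if and only if $L_X\alpha = f\alpha$ for some smooth function $f$: in one direction, $\frac{\drm}{\drm t}(\phi^t)^*\alpha = (\phi^t)^*(L_X\alpha) = (f\circ\phi^t)(\phi^t)^*\alpha$ is a linear ODE along flow lines whose solution $(\phi^t)^*\alpha = \exp\!\big(\int_0^t f\circ\phi^s\,\drm s\big)\,\alpha$ has the same kernel as $\alpha$; in the other direction, one writes $(\phi^t)^*\alpha = g_t\,\alpha$ and differentiates at $t = 0$. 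Thus (ii) is equivalent to $i_X\,\drm\alpha = f\alpha$ for some $f$. Contracting this identity with $X$ gives $f = f\,\alpha(X) = (i_X\,\drm\alpha)(X) = \drm\alpha(X,X) = 0$, so in fact $i_X\,\drm\alpha = 0$, which is (iii); and conversely (iii) gives $i_X\,\drm\alpha = 0\cdot\alpha$, which is (ii). This closes the cycle (i) $\Leftrightarrow$ (iii) $\Leftrightarrow$ (ii).

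I do not anticipate a real obstacle: the computation of $i_X\,\drm\alpha$ is a one-line exercise with the Levi-Civita connection, and $L_X\alpha = i_X\,\drm\alpha$ is immediate from Cartan. The only point deserving a word of care is the equivalence ``the flow of $X$ preserves a hyperplane field $\Leftrightarrow$ $L_X\alpha$ is pointwise proportional to $\alpha$'', where one should note that the proportionality factor $f$ is genuinely smooth (because $\alpha$ is nowhere zero, being identically $1$ on $X$) and that integrating the linear ODE above along flow lines is legitimate; both are routine.
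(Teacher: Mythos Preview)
Your proposal is correct and follows essentially the same approach as the paper: the equivalence (i) $\Leftrightarrow$ (iii) via the identity $i_X\,\drm\alpha = g(\nabla_X X,\cdot)$ (which the paper cites from elsewhere but you derive directly), and (ii) $\Leftrightarrow$ (iii) via Cartan's formula together with the observation that $L_X\alpha = f\alpha$ forces $f = 0$ upon contracting with $X$. Your write-up simply fills in more of the routine details that the paper leaves implicit.
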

\begin{proof}
$(i) \Longleftrightarrow (iii)$: A simple computation shows that $i_X \drm \alpha = g(\nabla_X X,\cdot)$, see \cite[Lemma 3.2]{geiges:2022}. Hence, by the nondegeneracy of $g$, it follows that $i_X \drm \alpha = 0 \Leftrightarrow \nabla_X X = 0$.

$(ii) \Longleftrightarrow (iii)$: If $i_X \drm \alpha = 0$, then $L_X \alpha = 0$; hence, the flow of $X$ preserves $\alpha$ and, in particular, $X^\perp = \ker \alpha$. Conversely, if the flow of $X$ preserves $X^\perp$, then $i_X \drm \alpha = L_X \alpha = f \alpha$ for some function $f \col M \to \R$. Plugging $X$ into both sides of the equation yields $f \equiv 0$ and hence $i_X \drm \alpha = 0$. 
\end{proof}
This immediately implies the following.
\begin{cor}\label{cor:geodesiblecharact}
Let $X$ be a nowhere vanishing vector field on some manifold. Then, the following are equivalent.
\begin{itemize}
    \item[(i)] $X$ is geodesible;
    \item[(ii)] the flow of $X$ preserves some transverse distribution;
    \item[(iii)] there is a $1$-form $\alpha$ such that $\alpha(X) = 1$ and $i_X \drm \alpha = 0$. \qed
\end{itemize} 
\end{cor}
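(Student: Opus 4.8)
The plan is to deduce the corollary directly from Lemma~\ref{lem:geodesiccharact} by passing freely between a Riemannian metric, a complementary hyperplane field, and the dual $1$-form of $X$. Throughout I read ``transverse distribution'' as a hyperplane field $D$ with $T_pM = \R X_p \oplus D_p$ at every point; since $X$ is nowhere vanishing such a $D$ is automatically of corank one. I would prove the cycle $(i)\Rightarrow(ii)\Rightarrow(iii)\Rightarrow(i)$.

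For $(i)\Rightarrow(ii)$ I would use that, by definition of geodesibility, there is a metric $g$ for which $X$ is a unit geodesic vector field; then Lemma~\ref{lem:geodesiccharact} (applied to $g$) shows that the flow of $X$ preserves $X^\perp=\ker(i_Xg)$, a hyperplane field transverse to $X$. For $(ii)\Rightarrow(iii)$, given a flow-invariant transverse hyperplane field $D$, I would let $\alpha$ be the unique $1$-form with $\ker\alpha=D$ and $\alpha(X)=1$; this is smooth because $D$ is a smooth hyperplane field complementary to the nowhere-vanishing $X$ (locally $\alpha=\beta/\beta(X)$ for any local defining form $\beta$ of $D$). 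The flow of $X$ preserves $X$ and $D$, hence preserves $\alpha$, i.e.\ $L_X\alpha=0$; Cartan's formula then gives $i_X\drm\alpha = L_X\alpha - \drm(\alpha(X)) = -\drm(1) = 0$.

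For $(iii)\Rightarrow(i)$, given $\alpha$ with $\alpha(X)=1$ and $i_X\drm\alpha=0$, I would set $D:=\ker\alpha$, choose any bundle metric $h$ on $D$, and define a metric $g$ on $TM=\R X\oplus D$ by $g(aX+v,\,bX+w):=ab+h(v,w)$ for $v,w\in D$. Then $|X|_g\equiv 1$, and a one-line computation gives $i_Xg=\alpha$; hence Lemma~\ref{lem:geodesiccharact} $(iii)\Rightarrow(i)$ applies and shows $X$ is geodesic for $g$, so $X$ is geodesible. This is exactly the metric described in the introduction.

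There is no genuine obstacle here — the corollary is a repackaging of Lemma~\ref{lem:geodesiccharact}. The only points needing a moment's care are fixing the meaning of ``transverse distribution'' as a complementary hyperplane field, checking smoothness of the $1$-form and metric constructed above, and verifying the identity $i_Xg=\alpha$ in the final step.
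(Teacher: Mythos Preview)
Your argument is correct and is precisely the unpacking the paper intends: the corollary carries a \qed and is introduced with ``this immediately implies the following'', i.e.\ it is left to the reader as a direct consequence of Lemma~\ref{lem:geodesiccharact}, and your cycle $(i)\Rightarrow(ii)\Rightarrow(iii)\Rightarrow(i)$ together with the auxiliary metric $g = \alpha\otimes\alpha + h$ is exactly the standard way to fill in those details.
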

Any 1-form $\alpha$ as in Corollary \ref{cor:geodesiblecharact} is called \textbf{characteristic $1$-form} (sometimes also \textit{connection $1$-form}). 

\begin{ex}\label{ex:reeb}
Let $M$ be a $(2n+1)$-dimensional manifold and $\alpha$ a \textbf{contact form} on $M$, i.e.\ $\alpha \wedge (\drm \alpha)^n$ is nowhere vanishing. In other words, the distribution $\xi:= \ker \alpha$ is maximally non-integrable, and $\xi$ is called a \textbf{contact structure}. Associated with $\alpha$ is the \textbf{Reeb vector field} $R$, which is uniquely defined by the two equations $i_R \drm \alpha = 0$ and $\alpha(R) = 1$. In particular, $\alpha$ is a characteristic $1$-form for $R$. It follows that every Reeb vector field is geodesible.
\end{ex}

Now assume that $X$ is a geodesic vector field on a $3$-dimensional Riemannian manifold $(M, g = \langle \cdot, \cdot \rangle)$. Consider the orthogonal plane field $X^\perp$ which is being preserved by the flow of $X$ by Lemma \ref{lem:geodesiccharact}. The question we are asking now is whether or not $X^\perp$ defines a contact structure, i.e.\ whether $\alpha := i_X g$ satisfies the contact condition $\alpha \wedge \drm \alpha \neq 0$. By Lemma \ref{lem:geodesiccharact} (iii), this is equivalent to $\restr{\drm \alpha}{X^\perp}$ being nowhere vanishing. This is a local issue, so we may consider a point $p \in M$ and tangent vectors $v, w \in X^\perp_p$. Extend $v$ and $w$ arbitrarily to local vector fields $V$ and $W$. Then
\[
\begin{aligned}
\drm \alpha(V,W) &= V (\alpha(W)) - W (\alpha(V)) - \alpha([V,W]) \\
&= V\langle X,W \rangle - W \langle X,V \rangle - \langle X,[V,W] \rangle \\
&= \langle \nabla_V X, W \rangle - \langle \nabla_W X,V \rangle + \langle X,\underbrace{\nabla_V W - \nabla_W V - [V,W]}_{=0}\rangle\\[-4pt]
&= \langle \nabla_V X, W \rangle - \langle \nabla_W X,V \rangle.
\end{aligned}
\]
Evaluating this at $p$ yields
\begin{equation}\label{eq:contactcondthreefirst}
    \drm \alpha_p(v,w) = \langle (\nabla_v X)_p, w \rangle - \langle (\nabla_w X)_p,v \rangle.
\end{equation}
Now, define the linear bundle morphism
\[
\beta := \nabla X \col X^\perp \longrightarrow X^\perp, \quad v \longmapsto \nabla_v X.
\]
(The notation $\beta$ is adopted from \cite{harrispaternain:2016}).
Note that since $\langle X,X \rangle \equiv 1$ we have that $0 = X \langle X,X\rangle = 2 \langle \beta(X),X \rangle$, hence the image of $\beta$ is indeed contained in $X^\perp$. Then (\ref{eq:contactcondthreefirst}) translates into
\begin{equation}\label{eq:contactcondthree}
\drm \alpha(v,w) = \langle \beta(v),w \rangle - \langle v,\beta(w)\rangle.
\end{equation}
Clearly, the right-hand side of (\ref{eq:contactcondthree}) vanishes for all $v,w$ if and only if $\beta_p$ is self-adjoint. Hence we have proven the following.
\begin{prop}\label{prop:contactcondequiv}
    Let $X$ be a geodesic vector field on a $3$-manifold. Then $X^\perp$ is contact if and only if $\beta_p$ is not self-adjoint for every $p\in M$. \qed
\end{prop}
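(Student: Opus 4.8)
The plan is essentially to repackage the computation already carried out above into the clean equivalence stated in the Proposition, filling in two small gaps. First I would record why, for a geodesic $X$, the contact condition on $\xi = X^\perp$ is genuinely pointwise and amounts to nonvanishing of $\drm\alpha$ restricted to the fibres of $X^\perp$. By Lemma \ref{lem:geodesiccharact}(iii) we have $\alpha(X)=1$ and $i_X\drm\alpha=0$, so evaluating $\alpha\wedge\drm\alpha$ on a frame $X_p,v,w$ with $v,w\in X^\perp_p$ gives $(\alpha\wedge\drm\alpha)_p(X_p,v,w)=\alpha(X_p)\,\drm\alpha_p(v,w)=\drm\alpha_p(v,w)$, the other terms vanishing because $\alpha(v)=\alpha(w)=0$ and $i_X\drm\alpha=0$. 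Hence $\alpha\wedge\drm\alpha$ is nowhere zero if and only if $\restr{\drm\alpha_p}{X^\perp_p}\neq 0$ for every $p$.

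Next I would reinterpret formula (\ref{eq:contactcondthree}) using the adjoint. Denote by $\beta_p^{*}$ the adjoint of $\beta_p$ with respect to the inner product $\langle\cdot,\cdot\rangle$ on the two-dimensional space $X^\perp_p$. Then (\ref{eq:contactcondthree}) reads $\drm\alpha_p(v,w)=\langle(\beta_p-\beta_p^{*})v,w\rangle$ for all $v,w\in X^\perp_p$. Since $\langle\cdot,\cdot\rangle$ is nondegenerate, $\restr{\drm\alpha_p}{X^\perp_p}$ vanishes identically precisely when $\beta_p-\beta_p^{*}=0$, i.e.\ when $\beta_p$ is self-adjoint; and if $\beta_p$ is not self-adjoint, then $\beta_p-\beta_p^{*}$ is a nonzero skew-symmetric endomorphism of a $2$-dimensional inner product space, hence invertible, so $\restr{\drm\alpha_p}{X^\perp_p}$ is a nonzero (indeed symplectic) $2$-form. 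Combining the two steps yields the claim: $X^\perp$ is contact $\iff$ $\restr{\drm\alpha_p}{X^\perp_p}\neq 0$ for all $p$ $\iff$ $\beta_p$ is not self-adjoint for all $p$.

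I do not anticipate a serious obstacle here: the analytic heart of the matter --- expressing $\drm\alpha$ through $\nabla X$ via $\drm\alpha(V,W)=V\alpha(W)-W\alpha(V)-\alpha([V,W])$ together with torsion-freeness of the Levi-Civita connection --- has already been done to obtain (\ref{eq:contactcondthree}), and the remaining assertions are linear algebra in dimension two. The only place that deserves an explicit sentence is the reduction of the three-dimensional contact condition to fibrewise nondegeneracy of a $2$-form, which is immediate since every nonzero alternating $2$-form on a $2$-dimensional vector space is nondegenerate; everything else is formal.
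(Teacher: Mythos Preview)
Your proposal is correct and is essentially the same argument as the paper's: the paper derives (\ref{eq:contactcondthree}) from the Koszul-type computation and then simply observes that the right-hand side vanishes for all $v,w$ precisely when $\beta_p$ is self-adjoint, after having noted that the contact condition amounts to $\restr{\drm\alpha}{X^\perp}$ being nowhere vanishing. Your write-up just makes the linear-algebra step (introducing $\beta_p^*$ and noting that a nonzero skew-symmetric endomorphism of a $2$-plane is invertible) a bit more explicit than the paper's one-line ``clearly''.
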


In case $X^\perp$ defines a contact structure, we also say that $X$ \textbf{induces} a contact structure.

\begin{rmk}\label{rmk:wholeorbitnotcontact}
\begin{enumerate}[leftmargin=*]
    \item[(i)] For a given point $p \in M$, denote by $\orb_X(p)$ the orbit of $X$ through $p$. Then Lemma \ref{lem:geodesiccharact} (iii) implies that $L_X \alpha = 0$, hence $X^{\perp}$ is contact either at every or at none of the points in $\orb_X(p)$.
    \item[(ii)] If $X$ induces a contact structure and $\alpha = i_X g$ the dual (contact) 1-form, then $\alpha(X) = 1$ and $i_X \drm \alpha = 0$ by Lemma \ref{lem:geodesiccharact}. Thus, $X$ is the Reeb vector field of $\alpha$.
\end{enumerate}

\end{rmk}

\section{Nonnegative Ricci curvature}
In this section, we will prove the following slight generalisation of the theorem of Aazami and Harris--Paternain mentioned in the introduction (see \cite[Corollary 1]{aazami:2015} and \cite[Proposition 1]{harrispaternain:2016}), in the sense that we allow the underlying manifold to be open and the Ricci curvature of $X$ to be nonnegative (instead of strictly positive). We will need this more general statement later in the proof of Theorem \ref{thm:locallysymmcontact}. I shall note, however, that the proof is essentially but a combination of the arguments in \cite{aazami:2015} and \cite{harrispaternain:2016}.

\begin{thm}\label{thm:inducedcontact}
Let $X$ be a complete geodesic vector field on a (not necessarily closed) Riemannian $3$-manifold $M$. Assume that
\[
\Rc X + \frac{\abs{\lambda-\mu}^2}{2} \geq 0
\]
everywhere, where $\lambda$ and $\mu$ are the (complex) eigenvalues of $\beta$. Then, if $X^\perp$ is not contact at $p \in M$, one of the following is true: 
\begin{enumerate}[(1)]
    \item $\Rc(X_p) < 0$, or
    \item $\Rc(X_p) = 0$ and $\beta_p = 0$.
\end{enumerate}
\end{thm}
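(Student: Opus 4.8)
The plan is to study how the plane field $X^\perp$, equivalently the morphism $\beta = \nabla X$, evolves along the flow line of $X$ through a point $p$ where $X^\perp$ fails to be contact, i.e. where $\beta_p$ is self-adjoint (Proposition \ref{prop:contactcondequiv}). By Remark \ref{rmk:wholeorbitnotcontact} (i), $X^\perp$ is then non-contact along the entire orbit $\orb_X(p)$, so $\beta$ is self-adjoint at every point of this geodesic $\gamma(t)$. The key identity is the Riccati-type equation satisfied by $\beta$ along the flow: differentiating $\beta = \nabla X$ along $X$ and using that $X$ is geodesic gives
\[
\nabla_X \beta + \beta^2 + R_X = 0,
\]
where $R_X(v) = R(v,X)X$ is the Jacobi operator (this is the standard computation behind Jacobi fields, adapted to the vector field $X$; it appears in \cite{aazami:2015, harrispaternain:2016}). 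Restricting to the $2$-plane $X^\perp$ and taking the trace yields a scalar ODE along $\gamma$ for $f(t) := \tr \beta_{\gamma(t)}$, namely $f' + \tr(\beta^2) + \Rc(X) = 0$, and taking the determinant (or using the characteristic polynomial) gives a companion relation for $\det\beta$.

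Next I would exploit self-adjointness. Since $\beta_{\gamma(t)}$ is a self-adjoint endomorphism of a $2$-dimensional inner product space, its eigenvalues $\lambda, \mu$ are real, so $\tr(\beta^2) = \lambda^2 + \mu^2 = \tfrac12(\lambda+\mu)^2 + \tfrac12(\lambda-\mu)^2 = \tfrac12 f^2 + \tfrac12|\lambda-\mu|^2$. Substituting into the trace equation,
\[
f' = -\tfrac12 f^2 - \Bigl(\Rc(X) + \tfrac{|\lambda-\mu|^2}{2}\Bigr) \leq -\tfrac12 f^2,
\]
using the hypothesis. This is the classical Riccati comparison inequality $f' \le -\tfrac12 f^2$ along a complete geodesic. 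The standard argument shows $f$ cannot change sign unfavourably: if $f(t_0) > 0$ for some $t_0$ the solution of the comparison equation blows up to $-\infty$ in finite forward time, which is impossible since $f$ is defined for all $t$ (here completeness of $X$ is used); similarly if $f(t_0) < 0$ it blows up in finite backward time. Hence $f \equiv 0$ along $\gamma$, and then $f' \equiv 0$ forces $\Rc(X) + \tfrac{|\lambda-\mu|^2}{2} \equiv 0$ along $\gamma$ as well. Combined with the hypothesis this pins down the sign: at $p$ we get $\Rc(X_p) = -\tfrac{|\lambda-\mu|^2}{2} \le 0$.

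Finally I would separate the two cases. If $\Rc(X_p) < 0$ we are in case (1) and done. If $\Rc(X_p) = 0$, then $\lambda = \mu$, and together with $f = \lambda + \mu = 0$ this gives $\lambda = \mu = 0$; since $\beta_p$ is self-adjoint with both eigenvalues zero, $\beta_p = 0$, which is case (2). The main obstacle is setting up the Riccati equation cleanly and justifying the blow-up (no-escape) argument rigorously — in particular making sure completeness of $X$ translates into the flow line being defined for all time so that $f$ is genuinely a global solution, and handling the trace/determinant bookkeeping for the (possibly non-self-adjoint away from $\gamma$, but self-adjoint on $\gamma$) operator $\beta$ correctly. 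Everything else is the standard comparison-geometry toolkit.
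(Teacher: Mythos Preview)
Your argument is essentially identical to the paper's: derive the Riccati equation $\beta' + \beta^2 + R_X = 0$, take traces, use self-adjointness of $\beta$ along the orbit to write $\tr\beta^2 = \tfrac12(\tr\beta)^2 + \tfrac12(\lambda-\mu)^2$, obtain the differential inequality $(\tr\beta)' \le -\tfrac12(\tr\beta)^2$, and conclude $\tr\beta \equiv 0$ by a blow-up/completeness argument, forcing $\Rc(X_p) = -\tfrac12(\lambda-\mu)^2 \le 0$. One small slip: you have the blow-up directions reversed --- from $f' \le -\tfrac12 f^2$, a positive value $f(t_0)>0$ forces $f\to +\infty$ in finite \emph{backward} time (and $f(t_0)<0$ forces $f\to -\infty$ in finite \emph{forward} time), but this does not affect the conclusion.
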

As an immediate consequence we obtain the following corollary.
\begin{cor}\label{cor:ricpos}
Let $X$ be a complete geodesic vector field on a (not necessarily closed) Riemannian $3$-manifold $M$. Assume that $\Rc(X) \geq 0$ everywhere and $\beta_p \neq 0$ if $\Rc(X_p) = 0$. Then $X$ induces a contact structure. \qed
\end{cor}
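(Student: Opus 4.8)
The plan is to deduce Corollary \ref{cor:ricpos} directly from Theorem \ref{thm:inducedcontact}, which I may assume, by checking that the hypotheses of the corollary (i) imply the curvature assumption needed to apply the theorem, and (ii) rule out both alternatives in its conclusion.

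First I would verify that the hypothesis $\Rc(X) \geq 0$ everywhere, together with $|\lambda - \mu|^2/2 \geq 0$ (which holds automatically whenever this quantity is real and nonnegative), guarantees the inequality $\Rc X + |\lambda-\mu|^2/2 \geq 0$ required to invoke Theorem \ref{thm:inducedcontact}. Here I should be slightly careful: $\lambda$ and $\mu$ are the possibly-complex eigenvalues of $\beta$, so $|\lambda-\mu|^2$ denotes the squared modulus and is manifestly a nonnegative real number, whence $\Rc X + |\lambda-\mu|^2/2 \geq \Rc X \geq 0$. Thus the theorem applies at every point $p \in M$.

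Next I would argue by contradiction. Suppose $X^\perp$ is not contact at some point $p \in M$. Then Theorem \ref{thm:inducedcontact} forces one of its two alternatives to hold at $p$. Alternative (1), namely $\Rc(X_p) < 0$, contradicts the standing hypothesis $\Rc(X) \geq 0$. Alternative (2) asserts $\Rc(X_p) = 0$ and $\beta_p = 0$; but the corollary's hypothesis stipulates that $\beta_p \neq 0$ whenever $\Rc(X_p) = 0$, which directly contradicts $\beta_p = 0$. Hence both alternatives are impossible, so $X^\perp$ must be contact at $p$. Since $p$ was arbitrary, $X^\perp$ is contact everywhere, i.e.\ $X$ induces a contact structure.

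Since essentially all the analytic work has been quarantined inside Theorem \ref{thm:inducedcontact}, there is no genuine obstacle here; the only point demanding even minor attention is the bookkeeping around the eigenvalues, ensuring that $|\lambda - \mu|^2$ is interpreted as a nonnegative real quantity so that the theorem's curvature hypothesis is implied by the stronger pointwise assumption $\Rc(X) \geq 0$. Everything else is a mechanical elimination of the two cases against the two hypotheses.
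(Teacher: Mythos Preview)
Your argument is correct and matches the paper's approach exactly: the paper states the corollary as ``an immediate consequence'' of Theorem~\ref{thm:inducedcontact} and omits any further proof. The only content is precisely the verification you give, that $\Rc X \geq 0$ forces the curvature hypothesis of the theorem (since $|\lambda-\mu|^2 \geq 0$) and that the two alternatives in its conclusion are ruled out by the two hypotheses of the corollary.
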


\begin{proof}[Proof of Theorem \ref{thm:inducedcontact}]
We argue as in \cite[Proposition 1]{aazami:2015}.
Assume that $X^\perp$ is not contact at $p \in M$. Parametrise the orbit through $p$ by $t \mapsto \gamma(t)$, where $\gamma(0) = p$. by Remark \ref{rmk:wholeorbitnotcontact} (i), $X$ is not contact along all of $\gamma$. Consider now the one-parameter family $t \mapsto \beta_t := \beta_{\gamma(t)}$. Now, an easy computation shows that $\beta$ satisfies the Riccati equation
\[
\beta^\prime + \beta^2 + R_X = 0,
\]
where $\beta^\prime$ denotes the covariant derivative of $\beta$ in the direction of $X$. Taking the trace of both sides of this equation, we obtain $\tr \beta^\prime + \tr \beta^2 + \Rc X = 0$. On the other hand, a simple calculation yields $\tr \beta^\prime = X(\tr \beta) =: (\tr \beta)^\prime$, hence
\[
(\tr \beta)^\prime = - \left(\Rc (X) + \tr \beta^2 \right),
\]
see also \cite[Lemma 3]{harrispaternain:2016}. Now, since $X$ is not contact at $\gamma(t)$, it follows that $\beta_t$ is self-adjoint by Proposition \ref{prop:contactcondequiv}. That is, $\beta_t$ admits two real eigenvalues $\lambda = \lambda(t)$ and $\mu = \mu(t)$. Hence $\tr \beta^2 = \lambda^2 + \mu^2$, so that
\begin{equation}\label{eq:trbeta}
\begin{aligned}
(\tr \beta)^\prime &= - \left(\Rc (X) + \lambda^2 + \mu^2\right) \\
&= -\left(\Rc(X) + \frac{(\lambda-\mu)^2}{2} + \frac{(\tr \beta)^2}{2}\right) \\
&\leq - \frac{(\tr \beta)^2}{2}.
\end{aligned}
\end{equation}
Now, assume for the moment that there is some $t_0 \in \R$ such that $\tr \beta_{t_0} \neq 0$. After replacing $X$ by $-X$, if necessary, we may assume that $\tr \beta_{t_0} < 0$.
Let $f$ be the unique solution to the initial value problem
\begin{equation}\label{eq:odef}
    \begin{cases}
    f^\prime + \frac{1}{2}f^2 = 0, \\
    f(0) = \tr \beta_{t_0}.
    \end{cases}
\end{equation}
Then, comparing (\ref{eq:trbeta}) and (\ref{eq:odef}), we see that $\tr \beta \leq f$ everywhere, and $f$ is given by $f(t) = ((t/2)+(1/f(0)))^{-1}$. Thus $f(t) \to - \infty$ and hence $\tr \beta_t \to -\infty$ as $t \to -2/f(0)$. But this cannot happen since $X$ is assumed to be complete.
It follows that $\tr \beta_t \equiv 0$. Hence, by (\ref{eq:trbeta}), $\Rc(X) + (\lambda-\mu)^2/2 = 0$ along $\gamma$. In particular we have that either $\Rc(X_p) < 0$ or $\Rc(X_p) = 0$ and $\lambda_p = \mu_p$. In the second case, since $0 = \tr \beta_p = \lambda_p + \mu_p$, we obtain that $\lambda_p = \mu_p = 0$, thus $\beta_p = 0$ which proves the claim.
\end{proof}

\section{Adapted Jacobi fields}\label{section:jacobi}
In this section, we introduce a notion of \emph{adapted Jacobi fields}, which can be described as follows.
Let $X$ be a complete geodesic vector field on a Riemannian manifold (of any dimension). Consider a geodesic variation $\Gamma$ consisting of integral curves of $X$, i.e.\
\begin{equation}\label{eq:geodvariation}
\Gamma \col (-\varepsilon,\varepsilon) \times \R \longrightarrow M, \quad \Gamma(s,t) = \phi_t(\exp_p(sv)),
\end{equation}
where $p \in M$, $v \in T_pM$ and $\phi_t$ is the time-$t$ flow of $X$. Here, $\varepsilon$ is chosen small enough so that $\exp_p(sv)$ is defined for $|s| < \varepsilon$.
\begin{defi}
A Jacobi field $J$ is called \textbf{adapted to $X$} if it is given as the variational field of a variation through integral curves of $X$, i.e.\ $J(t) = \restr{\partial_s}{s=0} \Gamma(s,t)$, with $\Gamma$ as in (\ref{eq:geodvariation}).
\end{defi}
Adapted Jacobi fields (although not by this name) were already studied by Godoy and Salvai in \cite{godoysalvai:2015}. 
For a Jacobi field $J$, denote by $J^\prime := \nabla_X J$ the covariant derivative of $J$ in the direction of $X$.
The following was first observed in \cite{godoysalvai:2015}.
\begin{prop}\label{prop:jacobiprop}
    If $J$ is a Jacobi field adapted to a geodesic vector field, then $J^\prime = \beta(J)$.
\end{prop}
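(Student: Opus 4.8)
The plan is to exploit the defining feature of an adapted Jacobi field: it is the variational field of the particular variation $\Gamma(s,t) = \phi_t(\exp_p(sv))$, where every $t$-curve is a flow line of $X$. Write $T := \partial_t \Gamma$ and $S := \partial_s \Gamma$, so that along $s = 0$ we have $T = X$ (restricted to the base geodesic) and $S = J$. The symmetry lemma for the Levi-Civita connection gives $\nabla_S T = \nabla_T S$, i.e.\ $\nabla_S T = J'$ at $s = 0$. So the whole statement reduces to identifying $\nabla_S T$ with $\beta(J) = \nabla_J X$ at $s = 0$.

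The key step is therefore: $\nabla_S T\big|_{s=0} = \nabla_{S}(X \circ \Gamma)\big|_{s=0} = \nabla_{S|_{s=0}} X = \nabla_J X = \beta(J)$. This uses crucially that $T = \partial_t \Gamma$ is exactly the restriction of the \emph{ambient} vector field $X$ along $\Gamma$, which holds because each curve $t \mapsto \Gamma(s,t)$ is by construction an integral curve of $X$; hence $T(s,t) = X_{\Gamma(s,t)}$ as a section of $\Gamma^* TM$. Differentiating a pulled-back ambient vector field in the $s$-direction and then setting $s = 0$ gives precisely $\nabla_{\partial_s\Gamma} X$ evaluated along the base curve, which is $\nabla_J X$ since $\partial_s \Gamma|_{s=0} = J$. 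Combining this with the symmetry lemma yields $J' = \beta(J)$.

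I would also remark briefly that one should double-check the base curve is indeed a geodesic (so that $J$ is genuinely a Jacobi field): since $X$ is geodesic, $t \mapsto \phi_t(p)$ is a geodesic, and $\Gamma$ is a variation through geodesics, so $J = \partial_s\Gamma|_{s=0}$ satisfies the Jacobi equation. This is implicit in the setup of Section \ref{section:jacobi} but worth noting for completeness.

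The main (minor) obstacle is purely bookkeeping: keeping straight which derivative is taken first. One must differentiate $T = X \circ \Gamma$ in the $s$-direction \emph{before} restricting to $s = 0$ — doing it in the other order is meaningless — and one must invoke the torsion-free symmetry $\nabla_{\partial_s}\partial_t\Gamma = \nabla_{\partial_t}\partial_s\Gamma$ correctly for a two-parameter map into $M$. Neither is deep; the content of the proposition is really just the observation that ``adapted'' forces $\partial_t\Gamma$ to be the ambient field $X$, which converts the $s$-derivative of the Jacobi field into $\nabla_{(\cdot)}X = \beta$.
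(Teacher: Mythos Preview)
Your proposal is correct and follows essentially the same argument as the paper: both use the symmetry lemma to swap the order of covariant differentiation, then observe that $\partial_t\Gamma = X_{\Gamma}$ because each $t$-curve is an integral curve of $X$, so that the $s$-derivative becomes $\nabla_J X = \beta(J)$. The paper's proof is just the terse version of what you wrote out.
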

\begin{proof}
    Let $\Gamma$ be as in (\ref{eq:geodvariation}) and set $\gamma := \Gamma(0,\cdot)$. Then
    \begin{align*}
J^\prime(t) &= \D_t \, \delsn \Gamma(s,t) \\&= \D_{s \vert_{s=0}} \, \partial_t \Gamma(s,t) \\&= \D_{s \vert_{s=0}} \, X_{\Gamma(s,t)} \\&= (\nabla_{J(t)} X)_{\gamma(t)} = \beta_{\gamma(t)}(J(t)), 
    \end{align*}
    where in the second equation, we used the symmetry lemma \cite[Lemma 6.3]{lee:1997}.
\end{proof}
\begin{cor}\label{cor:jacobicommute}
    If $J$ is a Jacobi field adapted to a geodesic vector field $X$, then $J$ and $X$ commute, i.e.\ $[J,X] = 0$.
\end{cor}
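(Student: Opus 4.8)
The plan is to read this off directly from Proposition \ref{prop:jacobiprop} together with the torsion-freeness of the Levi--Civita connection $\nabla$. Write $\gamma := \Gamma(0,\cdot)$ for the flow line of $X$ carrying $J$. A small preliminary point deserves a word: the bracket $[J,X]$ is only literally defined once $J$ has been extended to a vector field on an open set. Unless $J$ is everywhere a scalar multiple of $X$ along $\gamma$ (a case in which the claim is immediate, since then $[J,X]=[fX,X]$ for a constant $f$), the adapted variation $\Gamma$, suitably re-based at the point of interest, is an immersion there — its differential at $s=0,\,t=0$ sends $\partial_t\mapsto X$ and $\partial_s\mapsto J$, which are independent — so $\partial_s\Gamma$ gives a local vector field extending $J$, which one then extends arbitrarily to a neighbourhood. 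The value of $[J,X]$ along $\gamma$ is independent of this choice, since $[J,X]_{\gamma(t)}=(\nabla_J X)_{\gamma(t)}-(\nabla_X J)(t)$ involves $X$ only near $\gamma(t)$ and $J$ only along $\gamma$.

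Granting this, the computation is a single line: by torsion-freeness, $[J,X]=\nabla_J X-\nabla_X J$ along $\gamma$; the first term equals $\beta(J)$ by the very definition of $\beta=\nabla X$, and the second equals $J'=\beta(J)$ by Proposition \ref{prop:jacobiprop}; hence $[J,X]=\beta(J)-\beta(J)=0$.

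Alternatively, one can avoid extending $J$ at all: from $J(t)=\restr{\partial_s}{s=0}\Gamma(s,t)=(\drm\phi_t)_p(v)$ and the flow identity $\phi_r\circ\phi_{t-r}=\phi_t$ one reads off that $(\phi_r)_\ast J=J$ along $\gamma$ for every $r$, i.e.\ $J$ is invariant under the flow of $X$; differentiating at $r=0$ gives $\mathcal{L}_X J=0$, equivalently $[J,X]=0$.

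I do not anticipate any genuine obstacle here: the statement is a formal corollary of Proposition \ref{prop:jacobiprop}, and the only point meriting care is the interpretation of the Lie bracket of a field that is a priori defined only along $\gamma$.
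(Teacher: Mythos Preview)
Your main argument is correct and identical to the paper's one-line proof: Proposition~\ref{prop:jacobiprop} gives $\nabla_X J=\beta(J)=\nabla_J X$, and torsion-freeness of $\nabla$ then yields $[J,X]=\nabla_J X-\nabla_X J=0$. The extra care you take about extending $J$ off $\gamma$, and the alternative flow-invariance argument via $J(t)=(\drm\phi_t)_p v$, are sound additions but go beyond what the paper records.
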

\begin{proof}
    Using Proposition \ref{prop:jacobiprop}, we obtain $\nabla_X J = \beta(J) = \nabla_J X$, hence $[J,X] = \nabla_J X - \nabla_X J = 0$.
\end{proof}
\begin{cor}\label{cor:jacobicor}
    Let $J$ be a Jacobi field adapted to the geodesic vector field $X$. Then $J \equiv 0$ if and only if $J(t_0) = 0$ for some $t_0 \in \R$. 
\end{cor}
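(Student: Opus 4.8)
The plan is to combine Proposition \ref{prop:jacobiprop} with the uniqueness theorem for linear ODEs. The `only if' implication is trivial, so suppose $J(t_0) = 0$ for some $t_0 \in \R$, and let $\gamma$ be the integral curve of $X$ along which $J$ is defined, so that $J(t) = \restr{\partial_s}{s=0}\Gamma(s,t)$ for a variation $\Gamma$ as in (\ref{eq:geodvariation}). Since $X$ is complete, its flow is defined for all times, so $\gamma$ is defined on all of $\R$, and $t \mapsto \beta_{\gamma(t)}$ is a smooth one-parameter family of endomorphisms of $T_{\gamma(t)}M$ (preserving $X^\perp$).

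By Proposition \ref{prop:jacobiprop}, $J$ satisfies the first-order equation $J' = \beta_{\gamma(t)}(J)$ along $\gamma$. Choosing a parallel orthonormal frame along $\gamma$ and writing $u(t)$ for the coordinate vector of $J(t)$ and $B(t)$ for the matrix of $\beta_{\gamma(t)}$ in this frame, this becomes the homogeneous linear system $\dot u = B(t)\,u$, with $B$ smooth on all of $\R$. Since $u(t_0) = 0$, uniqueness of solutions of linear ODEs gives $u \equiv 0$, i.e.\ $J \equiv 0$.

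The key point — and the only thing requiring care — is that Proposition \ref{prop:jacobiprop} replaces the usual second-order Jacobi equation by the first-order equation $J' = \beta(J)$; this is what makes the single initial condition $J(t_0) = 0$ (rather than $J(t_0) = J'(t_0) = 0$) sufficient to force $J \equiv 0$. Completeness of $X$ enters only to guarantee that the coefficient $B(t)$ is defined for all $t \in \R$, so that the conclusion holds along the entire orbit and not merely on a subinterval. I do not anticipate any genuine obstacle beyond making these two observations precise.
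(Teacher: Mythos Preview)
Your proof is correct and follows essentially the same idea as the paper's: both use Proposition~\ref{prop:jacobiprop} to reduce the initial data needed to determine $J$ from $(J(t_0),J'(t_0))$ to $J(t_0)$ alone. The paper phrases this via the second-order Jacobi equation together with $J'(t_0)=\beta(J(t_0))=0$, whereas you invoke uniqueness for the first-order linear system $J'=\beta(J)$ directly; the two are equivalent, and your formulation is arguably the more streamlined of the two.
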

\begin{proof}
The Jacobi field $J$ solves the differential equation $J^{\prime \prime} + R(J,X)X = 0$. In particular, $J$ is determined completely by the values of $J$ and $J^\prime$ at any given point. From Proposition \ref{prop:jacobiprop} it follows that $J^\prime$ is determined by $J$, which proves the claim. \qedhere
\end{proof}
The obversations we made so far already allow for a complete characterisation of contact structures induced by Killing vector fields, as follows.
\begin{cor}
    Let $X$ be a Killing vector field of unit length on a Riemannian $3$-manifold. Then $X$ induces a contact structure if and only if $\beta \neq 0$ everywhere. In this case, the contact structure is universally tight.
\end{cor}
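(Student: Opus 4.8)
The plan is to treat the statement as two separate assertions. For the characterisation, I would start from the Killing equation $(L_X g)(Y,Z)=\langle\nabla_Y X,Z\rangle+\langle\nabla_Z X,Y\rangle=0$, which says precisely that the endomorphism $\nabla X$ of $TM$ is skew-adjoint; restricting to $X^{\perp}$ (which is genuinely $\beta$'s target, as already observed in Section \ref{section:pre}, and also because $|X|\equiv 1$) shows that $\beta_p\colon X^{\perp}_p\to X^{\perp}_p$ is skew-adjoint for every $p$. Now a skew-adjoint endomorphism of a two-dimensional inner product space is, in any orthonormal basis, a scalar multiple of a rotation by a right angle; such an endomorphism coincides with its adjoint if and only if that scalar vanishes, i.e.\ if and only if the endomorphism is zero. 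Hence $\beta_p$ is self-adjoint exactly when $\beta_p=0$. Feeding this into Proposition \ref{prop:contactcondequiv} — $X^{\perp}$ is contact iff $\beta_p$ fails to be self-adjoint at every point — yields the equivalence ``$X$ induces a contact structure $\iff$ $\beta_p\neq 0$ for all $p$''.

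For the tightness claim, assume $X$ induces a contact structure and put $\alpha:=i_X g$. By Remark \ref{rmk:wholeorbitnotcontact}(ii), $X$ is the Reeb vector field of the contact form $\alpha$. Since $X$ is Killing, its flow consists of isometries of $(M,g)$, and since the flow also preserves $X$ itself it preserves $\alpha=i_X g$; thus the Reeb flow of $\alpha$ is isometric. The conclusion that $\xi=\ker\alpha$ is universally tight is then exactly the isometric case of the tightness criterion for Reeb flows, i.e.\ Theorem \ref{prop:killingtight}.

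So within the scope of this corollary there is essentially no obstacle: the characterisation is a one-line piece of linear algebra once the Killing equation is invoked, and the tightness statement is reduced in two lines to ``the Reeb flow is isometric''. The real content sits in Theorem \ref{prop:killingtight}, proved separately in Section \ref{section:tightness}. If one instead wanted a proof not relying on that section, the hard part would be to establish universal tightness directly — passing to the universal cover, where $X$ lifts to a complete unit Killing Reeb field, and excluding overtwisted discs using the relation $J'=\beta(J)$ of Proposition \ref{prop:jacobiprop} (which, as $\beta$ is skew-adjoint, forces every adapted Jacobi field to have constant length along flow lines) — but this is precisely the argument one would run to prove Theorem \ref{prop:killingtight} in the first place.
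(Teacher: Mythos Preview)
Your proof is correct, but the characterisation half takes a genuinely different route from the paper. The paper argues via adapted Jacobi fields: assuming $\beta_p$ has a real eigenvalue $\lambda$ with unit eigenvector $v$, it takes the adapted Jacobi field $J$ with $J(0)=v$, uses Proposition~\ref{prop:jacobiprop} to get $J'(0)=\lambda v$, observes via Corollary~\ref{cor:jacobicommute} that $J$ is flow-invariant, and then uses the Killing hypothesis to conclude $|J|$ is constant, whence $\langle J',J\rangle\equiv 0$ and $\lambda=0$. You instead read off skew-adjointness of $\beta$ directly from the Killing equation and finish with two-dimensional linear algebra. Your argument is shorter and more elementary; the paper's argument has the expository advantage of exercising the adapted-Jacobi-field machinery on a soft example before it is deployed in the harder Theorems~\ref{thm:contgeoconst} and~\ref{thm:locallysymmcontact}. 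For the tightness claim, you and the paper do the same thing: reduce to the isometric-Reeb case handled in Section~\ref{section:tightness}.

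One tangential remark: your closing speculation that Theorem~\ref{prop:killingtight} itself would be proved by a Jacobi-field/constant-length argument is off the mark. The paper's proof of that theorem runs through compact Lie group theory---taking the closure of the Reeb flow in $\isom(M)$, perturbing to a periodic Killing Reeb field inside the resulting torus, and invoking Proposition~\ref{prop:periodictight}. This does not affect the validity of your proof of the corollary, only the accuracy of the aside.
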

\begin{proof}
    The condition $\beta \neq 0$ is clearly necessary by (\ref{eq:contactcondthree}). Conversely, assume that $\beta \neq 0$ everywhere. In view of Proposition \ref{prop:contactcondequiv}, it suffices to show that if $\lambda$ is a real eigenvalue of $\beta_p$ for some $p\in M$, then $\lambda = 0$. Thus, assume that $v \in X_p^\perp$ is a (unit) eigenvector corresponding to the eigenvalue $\lambda$, and let $J$ be a Jacobi field adapted to $X$ with $J(0) = v$. Then, by Proposition \ref{prop:jacobiprop}, $J^\prime(0) = \beta(J(0)) = \lambda v$. Furthermore, $J$ is invariant under the flow of $X$ by Corollary \ref{cor:jacobicommute}. Since $X$ is Killing, this means that $|J| = \text{const.}$, hence $\langle J^\prime , J \rangle \equiv 0$. Evaluating this at $p$ yields $\lambda = 0$. This proves the first part.
    The universal tightness of the induced contact structure will be proved in a more general setting in Section \ref{section:tightness} (in fact, it will follow immediately from Proposition \ref{prop:killingtight} and Remark \ref{rmk:wholeorbitnotcontact} (ii)).
\end{proof}
\section{Space forms}
Let us now assume that $M$ is a 3-dimensional space form, i.e.\ $M$ has constant sectional curvature. Then we obtain the following.
\begin{thm}\label{thm:contgeoconst}
Let $X$ be a complete geodesic vector field on a Riemannian $3$-manifold $M$ of constant sectional curvature $c$, and $\beta_p = \nabla X \col X_p^\perp \to X_p^\perp, \, p \in M$. Then:
\begin{itemize}
\item If $c > 0$, then $\beta_p$ does not admit any real eigenvalues.
\item If $c = 0$ and $\lambda$ is a real eigenvalue of $\beta_p$, then $\lambda = 0$.
\item If $c < 0$ and $\lambda$ is a real eigenvalue of $\beta_p$, then $|\lambda| \leq \sqrt{|c|}$.
\end{itemize}
\end{thm}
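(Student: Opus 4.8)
The plan is to exploit the Riccati equation $\beta' + \beta^2 + R_X = 0$ together with the fact that on a space form of constant curvature $c$ one has $R_X = c\cdot\mathrm{id}$ on $X^\perp$. Indeed, for $v,w\in X_p^\perp$ one computes $R(v,X)X = c\,\big(\langle X,X\rangle v - \langle v,X\rangle X\big) = c\,v$, so the zeroth-order term in the Riccati equation is just multiplication by the constant $c$. Suppose now that $\lambda$ is a real eigenvalue of $\beta_p$, with unit eigenvector $v$. Extend $v$ to an adapted Jacobi field $J$ along the orbit $\gamma$ through $p$ with $J(0)=v$; by Proposition \ref{prop:jacobiprop} we have $J'=\beta(J)$, and by Corollary \ref{cor:jacobicor} the field $J$ never vanishes. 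The idea is to track the scalar function $r(t):=|J(t)|$ (equivalently $\rho:=|J|^2$) and show that if $\lambda\neq 0$ (in the cases $c\geq 0$) or $|\lambda|>\sqrt{|c|}$ (in the case $c<0$), then $r$ blows up or hits zero in finite time, contradicting either completeness of $X$ or Corollary \ref{cor:jacobicor}.

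Concretely, I would first reduce to a scalar ODE. Along $\gamma$, differentiating $\rho=\langle J,J\rangle$ gives $\rho' = 2\langle J',J\rangle = 2\langle\beta(J),J\rangle$, and differentiating again and using $J''=-R(J,X)X=-cJ$ gives $\rho'' = 2\langle J',J'\rangle + 2\langle J'',J\rangle = 2|J'|^2 - 2c\,\rho$. This is not quite closed in $\rho$ alone, so instead I would work directly with the $2\times 2$ symmetric matrix $B(t):=\beta_{\gamma(t)}$ restricted to the (flow-invariant, by Corollary \ref{cor:jacobicommute}) considerations, or — cleaner — argue as follows. Consider the one-parameter family of endpoint behaviour of the eigenvalue: wherever $\beta_{\gamma(t)}$ is self-adjoint (which, by Proposition \ref{prop:contactcondequiv}, is exactly where $X^\perp$ fails to be contact, and by Remark \ref{rmk:wholeorbitnotcontact}(i) this is an all-or-nothing condition along $\gamma$), the Riccati equation for $\beta$ descends to the scalar Riccati equation $\lambda' + \lambda^2 + c = 0$ for each eigenvalue. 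If instead $\beta_{\gamma(t)}$ is \emph{not} self-adjoint along $\gamma$ — so $X^\perp$ is already contact — then $\beta_p$ has complex eigenvalues, so there is nothing to prove for the first bullet, and for the $c=0,\ c<0$ bullets one observes that a real eigenvalue of $\beta_p$ would force, via the characteristic polynomial, $\tr\beta_p$ and $\det\beta_p$ real, and one then runs the trace argument of Theorem \ref{thm:inducedcontact}: $(\tr\beta)' = -(\Rc X + \tr\beta^2) = -(2c + \lambda^2+\mu^2)$ — but this requires knowing both eigenvalues are real, so this branch needs care. The clean route is the first one: on the orbit of a point where $\beta$ has a real eigenvalue, $\beta$ is self-adjoint \emph{everywhere} along $\gamma$ (since having a real eigenvalue at one point of a one-parameter family of $2\times2$ matrices solving a Riccati equation is not obviously a closed condition — I'll need to confirm self-adjointness propagates, which follows because the symmetric and anti-symmetric parts of $\beta$ satisfy a linear ODE in the anti-symmetric part once one uses $R_X = c\cdot\mathrm{id}$ is symmetric, so if $\beta_p$ is symmetric then so is $\beta_{\gamma(t)}$ for all $t$).

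Granting that, each real eigenvalue function $\lambda(t)$ (smooth where eigenvalues are simple, and handled by continuity at crossings) satisfies $\lambda' = -\lambda^2 - c$. Now I analyze this scalar Riccati equation by cases on $c$. If $c>0$: $\lambda' = -\lambda^2 - c \leq -c < 0$, so $\lambda$ is strictly decreasing with derivative bounded away from zero, hence $\lambda(t)\to-\infty$ in finite forward time and $\lambda(t)\to+\infty$ in finite backward time; but a finite-time blow-up of $\lambda$ forces, via $J'=\lambda J$ type control (or directly: $\tr\beta$ blows up), a contradiction with completeness of $X$ exactly as in the proof of Theorem \ref{thm:inducedcontact} — so no real eigenvalue can exist. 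If $c=0$: $\lambda' = -\lambda^2 \leq 0$; if $\lambda(t_0)<0$ for some $t_0$ then $\lambda(t) = (t/?)$-type solution blows up to $-\infty$ in finite backward time (solving $\lambda' = -\lambda^2$ gives $\lambda(t) = 1/(t - t_0 + 1/\lambda(t_0))$, which blows up when the denominator vanishes), and if $\lambda(t_0)>0$ it blows up in finite forward time — both contradict completeness; hence $\lambda\equiv 0$. If $c<0$, write $k=\sqrt{|c|}$ so $\lambda' = -\lambda^2 + k^2 = -(\lambda-k)(\lambda+k)$; the constant solutions are $\lambda\equiv\pm k$, and any solution with $\lambda(t_0)>k$ satisfies $\lambda' < 0$ but is pushed toward $k$ only if... no: for $\lambda>k$ we get $\lambda'<0$ so it decreases toward $k$ — that does \emph{not} blow up forward, but going backward $\lambda$ increases and $\lambda' = -(\lambda-k)(\lambda+k)$ becomes more negative, i.e. $-\lambda'$ grows like $\lambda^2$, forcing blow-up to $+\infty$ in finite backward time; symmetrically $\lambda(t_0)<-k$ forces blow-up to $-\infty$ in finite forward time. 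Either way completeness is violated, so $|\lambda(t)|\leq k = \sqrt{|c|}$ for all $t$, in particular at $p$. I would then remark that the blow-up-of-$\lambda$ versus completeness-of-$X$ implication is precisely the mechanism already used in the proof of Theorem \ref{thm:inducedcontact} (where $\tr\beta\to-\infty$ was ruled out), so I can cite that argument rather than repeat it.

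The main obstacle I anticipate is the bookkeeping around self-adjointness: I must be sure that the hypothesis "$\beta_p$ has a real eigenvalue" is what lets me reduce the matrix Riccati equation to the scalar one $\lambda'=-\lambda^2-c$ along the whole orbit. The clean statement is that if $\beta_p$ is self-adjoint then $\beta_{\gamma(t)}$ is self-adjoint for all $t$ (because $R_X=c\cdot\mathrm{id}$ is symmetric, so the anti-symmetric part $A$ of $\beta$ satisfies $A' + (\text{symmetric}\cdot A + A\cdot\text{symmetric}) = 0$, a linear homogeneous ODE in $A$, whence $A(0)=0\Rightarrow A\equiv 0$), and a real $2\times 2$ matrix with a real eigenvalue need not be symmetric, so strictly I should instead argue: if $X^\perp$ is contact at $p$ then $\beta_p$ is not self-adjoint (Prop.\ \ref{prop:contactcondequiv}); its eigenvalues are then either a complex-conjugate pair (nothing to prove / or handled by the discriminant) or real and distinct with $\beta_p$ merely non-symmetric — and in that last sub-case I fall back on the trace inequality. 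I will structure the write-up to handle "$X^\perp$ not contact at $p$" via the self-adjoint scalar Riccati reduction above, and "$X^\perp$ contact at $p$ but $\beta_p$ has a real eigenvalue" via the eigenvalue functions of the (now possibly non-symmetric) Riccati flow, noting that each complex eigenvalue $\lambda(t)$ still solves $\lambda' = -\lambda^2 - c$ as a complex ODE, so a real initial value stays real and the same finite-time blow-up analysis applies verbatim.
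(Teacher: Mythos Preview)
Your approach is correct but takes a longer and more anxious route than the paper's. The paper bypasses the Riccati equation entirely and works directly with the adapted Jacobi field: since $J(0)=v$ and $J'(0)=\beta_p(v)=\lambda v$ are both multiples of $v$, and since the Jacobi equation on a space form reads $J''+cJ=0$ with \emph{scalar} $c$, the field $J$ stays in the parallel line through $v$. Writing $J(t)=f(t)\,E(t)$ with $E$ parallel and $E(0)=v$, one has $f''+cf=0$, $f(0)=1$, $f'(0)=\lambda$; the explicit solutions $\cos/\sin$, linear, $\cosh/\sinh$ then exhibit a zero of $f$ precisely under the stated conditions, contradicting Corollary~\ref{cor:jacobicor}. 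No self-adjointness, no discriminant tracking, no case split on whether $X^\perp$ is contact at $p$.

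Your scalar Riccati $\lambda'=-\lambda^2-c$ is exactly the logarithmic derivative $\lambda=f'/f$ of this $f$, so the blow-up you detect is the same zero of $J$ seen through a different lens. Your worry about whether ``$\beta_p$ has a real eigenvalue'' forces self-adjointness along the orbit is a red herring: the paper's argument never needs $\beta$ to be self-adjoint anywhere, and your own final paragraph (eigenvalues of the matrix Riccati flow individually satisfy the scalar Riccati, and the discriminant $D=s^2-4p$ satisfies $D'=-2sD$ so its sign persists) already shows that the self-adjoint detour is unnecessary even within your framework. The take-away: once you see that the initial data $J(0),J'(0)$ are collinear, reduce to a scalar second-order linear ODE rather than a scalar first-order nonlinear one --- you get the explicit solution for free and the bookkeeping vanishes.
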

\begin{proof}
Assume that $\beta_p$ has a real eigenvalue $\lambda$ for some $p\in M$, and let $v \in X_p^{\perp}$ be a corresponding eigenvector of unit length. Let $\gamma$ be the integral curve of $X$ through $p$. Consider the Jacobi field $J$ along $\gamma$ adapted to $X$ with initial condition $J(0) = v$. Then, by Proposition \ref{prop:jacobiprop}, $J^\prime(0) = \beta(J(0)) = \lambda v$. Now, since $M$ has constant sectional curvature equal to $c$, the Jacobi equation translates into $J^{\prime\prime} + c J = 0$. Hence, for $c > 0$, $J$ is given by
\[
J(t) = \left(\cos(\sqrt{c}t) + \frac{\lambda}{\sqrt{c}}\sin(\sqrt{c}t)\right) E(t),
\]
where $E$ is the parallel vector field along $\gamma$ satisfying $E(0) = v$. Now we have $J(t_0) = 0$ for 
\[
t_0 = \frac{1}{\sqrt{c}} \arccot\left(\frac{-\lambda}{\sqrt{c}}\right),
\]
hence $J \equiv 0$ by Corollary \ref{cor:jacobicor}, contradicting the fact that $J(0) = v \neq 0$.

If $c = 0$, then $J$ is given by $J(t) = (1+\lambda \, t)E(t)$ with $E$ as before. If $\lambda \neq 0$, then $J(-1/\lambda) = 0$, which contradicts Corollary \ref{cor:jacobicor} again.

Finally, assume that $c < 0$. Then $J$ is given by
\[
J(t) = \bigg(\cosh(\sqrt{|c|}t) + \frac{\lambda}{\sqrt{|c|}} \sinh(\sqrt{|c|}t) \bigg) E(t),
\]
with $E$ as before. Then, if $|\lambda| > \sqrt{|c|}$, we have that $J(t_0) = 0$ for
\[
t_0 = \frac{1}{\sqrt{|c|}} \arcoth\bigg(\frac{-\lambda}{\sqrt{|c|}}\bigg)
\]
which contradicts Corollary \ref{cor:jacobicor} again. Therefore, we must have $|\lambda| \leq \sqrt{|c|}$. This finishes the proof. 
\end{proof}
Together with Proposition \ref{prop:contactcondequiv}, we obtain the following statement, in particular reproving the results of Gluck and Harrison mentioned in the introduction.
\begin{cor}\label{cor:spaceformsinducedcs}
    Let $M$ be a Riemannian $3$-manifold of constant sectional curvature $c$ and let $X$ be a geodesic vector field on $M$. Then:
\begin{itemize}
\item If $c > 0$, then $X$ induces a contact structure.
\item If $c = 0$, then $X$ induces a contact structure if and only if $\beta$ is nowhere vanishing.
\item If $c < 0$, then $X$ induces a contact structure if and only if for all $p\in M$, there is no orthonormal basis of $X^{\perp}_p$ consisting of eigenvectors of $\beta_p$ with corresponding eigenvalues of absolute value $\leq \sqrt{|c|}$.
\end{itemize}
In any of these cases, the induced contact structure is universally tight.
\end{cor}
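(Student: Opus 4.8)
I would split the statement into the contact criterion (the three bullet points), which is immediate from what precedes, and the universal tightness, which is the substantive half. For the contact criterion, recall from Proposition \ref{prop:contactcondequiv} that $X^{\perp}$ is a contact structure exactly when $\beta_p$ fails to be self-adjoint at every $p\in M$, and that on a $2$-dimensional Euclidean space an endomorphism is \emph{self-adjoint} if and only if it is diagonalised by some orthonormal basis --- in particular a self-adjoint $\beta_p$ has two real eigenvalues, and conversely an endomorphism possessing an orthonormal eigenbasis is self-adjoint. Feeding this into Theorem \ref{thm:contgeoconst}: if $c>0$ then $\beta_p$ has no real eigenvalue, hence is never self-adjoint, so $X^{\perp}$ is always contact. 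If $c=0$, a self-adjoint $\beta_p$ would have two real eigenvalues, both forced to vanish by the theorem, hence $\beta_p=0$; thus $\beta$ nowhere zero forces $X^{\perp}$ contact, while if $\beta_{p_0}=0$ then $\drm\alpha$ restricted to $X^{\perp}_{p_0}$ vanishes, and by Remark \ref{rmk:wholeorbitnotcontact}(i) so does its restriction along the whole orbit, so $X^{\perp}$ is not contact --- this is the stated equivalence. If $c<0$, then $\beta_p$ is self-adjoint precisely when $X^{\perp}_p$ admits an orthonormal basis of eigenvectors of $\beta_p$, whose eigenvalues necessarily satisfy $|\lambda|\le\sqrt{|c|}$ by the theorem; hence $X^{\perp}$ fails to be contact somewhere exactly when such a basis exists somewhere, which is the contrapositive of the condition in the third bullet.

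For universal tightness I would pass to the universal cover $\widetilde M\to M$, which carries the pulled-back metric (still of constant sectional curvature $c$), the lifted vector field $\widetilde X$ (geodesic, and complete since the flow of $X$ lifts), and the lifted plane field $\widetilde\xi=\widetilde X^{\perp}$, which is again contact because self-adjointness of $\beta$ is a local condition; by Remark \ref{rmk:wholeorbitnotcontact}(ii), $\widetilde X$ is the Reeb vector field of $i_{\widetilde X}\widetilde g$. Since universal tightness of $\xi$ is by definition tightness of $\widetilde\xi$, it suffices to show $\widetilde\xi$ is tight. The flow lines of $\widetilde X$ are complete unit-speed geodesics of a simply connected space form. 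If $c>0$, each such geodesic is a closed great circle of length $2\pi/\sqrt{c}$, so the time-$2\pi/\sqrt{c}$ flow of $\widetilde X$ is the identity and $\widetilde X$ generates a periodic Reeb flow; tightness then follows from the periodic-flow criterion (Proposition \ref{prop:periodictight}). If $c\le 0$, there are no closed geodesics, so the $\R$-action generated by $\widetilde X$ is free and its orbits are properly embedded lines; once one verifies that this action is moreover proper --- equivalently, that its orbit space is a surface --- tightness follows from the free-and-proper criterion (Theorem \ref{thm:freereebflow}), and in the flat case one is in (or adjacent to) the line-fibration situation underlying Corollary \ref{cor:linetight}.

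The main obstacle is not the space-form bookkeeping above but the tightness criteria it relies on: that periodic, or free and proper, Reeb flows yield universally tight contact structures is genuine contact topology and is precisely what Section \ref{section:tightness} supplies; the space-form argument merely funnels each curvature case into one of those hypotheses. Within the reduction itself, the delicate point is the case $c<0$ (and, to a lesser extent, $c=0$), where one must rule out pathological accumulation of the flow lines of a complete geodesic vector field on hyperbolic space, or an open subset of it, before the free-and-proper criterion can be applied; here it is the global dynamics of $\widetilde X$, rather than the pointwise linear algebra encoded in $\beta$, that has to be controlled.
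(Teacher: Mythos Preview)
Your argument is correct and follows the same route as the paper: the contact criterion is deduced from Proposition~\ref{prop:contactcondequiv} and Theorem~\ref{thm:contgeoconst} exactly as you do, and for universal tightness the paper likewise defers to Section~\ref{section:tightness}, invoking Proposition~\ref{prop:periodictight} in the periodic case and Theorem~\ref{thm:freereebflow} in the free case via Remark~\ref{rmk:wholeorbitnotcontact}(ii). You are in fact more explicit than the paper in flagging that properness of the lifted flow for $c\le 0$ requires verification; the paper simply cites the Section~\ref{section:tightness} results without spelling out this reduction.
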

\begin{proof}
    If $X^\perp$ is not contact at $p$, then $\beta_p$ is self-adjoint by Proposition \ref{prop:contactcondequiv}, hence there is an orthonormal basis of $X^\perp_p$ consisting of eigenvectors of $\beta_p$ corresponding to real eigenvalues $\lambda_p$ and $\mu_p$. If $c > 0$ then this contradicts Theorem \ref{thm:contgeoconst}, hence $X^\perp$ is contact in this case. If $c = 0$ then $\lambda_p = \mu_p = 0$ by Theorem \ref{thm:contgeoconst}, hence $\beta_p = 0$. If $c < 0$, then $\abs{\lambda}, \abs{\mu} \leq \sqrt{|c|}$. 
    In the latter two cases, the conditions are clearly necessary by (\ref{eq:contactcondthree}).

    For the proof of universal tightness we again refer to Section \ref{section:tightness} (see Propositions \ref{thm:freereebflow}, \ref{prop:periodictight} and Remark \ref{rmk:wholeorbitnotcontact} (ii)).\qedhere
\end{proof}
\begin{rmk}
    Of course, the first part of Corollary \ref{cor:spaceformsinducedcs} for $c \geq 0$ also follows from Theorem \ref{thm:inducedcontact}. The proof above, however, is arguably more instructive, in particular in view of the proof of Theorem \ref{thm:locallysymmcontact} in the following section.
\end{rmk}
\section{Geodesic fields with parallel Jacobi tensor}
Given a geodesic vector field $X$, consider the \textbf{Jacobi tensor} $R_X$ defined by
\[
R_X \col X^{\perp} \longrightarrow X^{\perp}, \quad v \longmapsto R_X(v) := R(v,X)X.,
\]
where $R$ denotes the Riemann curvature tensor. It follows from the Bianchi identities that $R_{X_p}$ is a self-adjoint operator for every $p \in M$. In particular, there is an orthonormal basis $e_1,e_2$ of $X_p^\perp$ consisting of eigenvectors of $R_{X_p}$ corresponding to real eigenvalues $\Delta, \delta \in \R$, given by
\begin{equation}\label{eq:eigenvaluesrx}
\Delta = \max\limits_{v \in X^\perp_p \setminus \{0\}} K(v,X_p), \quad \delta = \min\limits_{v \in X^\perp_p \setminus \{0\}} K(v,X_p), 
\end{equation}

where $K(\cdot,\cdot)$ denotes the sectional curvature.
Extend $e_1, e_2$ to vector fields $E_1,E_2$ parallel along the integral curve of $X$ through $p$. Now consider a Jacobi field $J$ through $p$ adapted to $X$, and write $J = J_1 E_1 + J_2 E_2$. Then $J$ satisfies the equations
\begin{equation}\label{eq:jacobi}
\begin{cases}
J_1^{\prime\prime} + \Delta J_1 = 0,\\
J_2^{\prime\prime} + \delta J_2 = 0.
\end{cases}
\end{equation}
Indeed, using the fact that $R_X$ is parallel we obtain
\[
0 = (\nabla_X R_X)(E_i) = \nabla_X (R_X(E_i)) - R_X(\underbrace{\nabla_X E_i}_{= 0}) = \nabla_X (R_X(E_i)),
\]
hence the vector fields $R_X(E_i)$ are parallel, too. Since $R_X(e_1) = \Delta e_1$, it follows that $R_X(E_1) = \Delta E_1$, and similarly, $R_X(E_2) = \delta E_2$. Then $R_X(J) = \Delta J_1 E_1 + \delta J_2 E_2$, and the Jacobi equations become
\[
0 = J^{\prime\prime} + R_X(J) = \left(J_1^{\dprime} + \Delta J_1 \right) E_1 + \left(J_2^{\dprime} + \delta J_2 \right) E_2, 
\]
hence $J_1^{\dprime} + \Delta J_1 = 0$ and $J_2^{\dprime} + \delta J_2 = 0$. 

We are now ready to prove the main result of this section. This can be viewed as an extension of Corollary \ref{cor:spaceformsinducedcs} as it applies in particular to all locally symmetric spaces.
\begin{thm}\label{thm:locallysymmcontact}
    Let $X$ be a complete geodesic vector field on a Riemannian $3$-manifold $M$. Assume that $\nabla_X R_X = 0$ and for every $p \in M$, one of the following holds:
\begin{enumerate}[(i)]
    \item $\max\limits_{v \in X^\perp_p \setminus \{0\}} K(v,X_p) > 0$, or 
    \vspace{5pt}
    \item $\max\limits_{v \in X^\perp_p \setminus \{0\}} K(v,X_p) = 0$ and $\rank \beta_p = 2$,
\end{enumerate}
where $K(\cdot,\cdot)$ denotes the sectional curvature. Then X induces a contact structure.
\end{thm}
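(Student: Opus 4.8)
My plan is to argue by contradiction through Proposition \ref{prop:contactcondequiv}. Suppose $\beta_p$ were self-adjoint for some $p\in M$. Then by Remark \ref{rmk:wholeorbitnotcontact}(i) the operator $\beta_{\gamma(t)}$ is self-adjoint for every $t\in\R$, where $\gamma$ is the (complete, by hypothesis) integral curve of $X$ with $\gamma(0)=p$. The quantity I would track is the single scalar function
\[
a(t):=\langle \beta_{\gamma(t)}E_1(t),E_1(t)\rangle,
\]
where $E_1$ is the parallel vector field along $\gamma$ with $E_1(0)=e_1$, and $e_1\in X_p^\perp$ is a unit eigenvector of $R_{X_p}$ for the largest eigenvalue $\Delta=\max_{v}K(v,X_p)$. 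As in the computation preceding the theorem statement, $\nabla_XR_X=0$ together with $\nabla_XE_1=0$ gives $R_XE_1=\Delta E_1$ along all of $\gamma$, so in particular $\langle R_XE_1,E_1\rangle\equiv\Delta$.

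Differentiating $a$ and substituting the Riccati equation $\beta'+\beta^2+R_X=0$ (established in the proof of Theorem \ref{thm:inducedcontact}), and then using that $\beta$ is self-adjoint along $\gamma$ --- so that $\langle\beta^2E_1,E_1\rangle=|\beta E_1|^2\ge\langle\beta E_1,E_1\rangle^2=a^2$ by Cauchy--Schwarz --- I obtain the differential inequality
\[
a'=-|\beta E_1|^2-\Delta\le-a^2-\Delta .
\]
This inequality is the heart of the proof; the rest is routine ODE comparison. In case (i), $\Delta>0$, and every maximal solution of $\dot g=-g^2-\Delta$ tends to $-\infty$ in finite forward time; by comparison $a$ does the same, which is impossible since $a$ is a smooth function on all of $\R$. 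In case (ii), $\Delta=0$, so $a'=-|\beta E_1|^2\le-a^2\le 0$; if $a(t_1)\ne 0$ for some $t_1$, a comparison with $\dot g=-g^2$, run forward from $t_1$ when $a(t_1)<0$ and backward from $t_1$ when $a(t_1)>0$, again forces $a$ to blow up in finite time, a contradiction. Hence $a\equiv 0$, so $|\beta E_1|^2=-a'\equiv 0$, which gives $\beta_pe_1=0$ and therefore $\rank\beta_p\le 1$, contradicting the standing hypothesis $\rank\beta_p=2$. Either way we have a contradiction, so $\beta_p$ is not self-adjoint anywhere, and Proposition \ref{prop:contactcondequiv} yields that $X^\perp$ is contact.

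I expect the main conceptual obstacle to be recognising the right quantity to control. The naive adaptation of the space-form argument (Corollary \ref{cor:spaceformsinducedcs}) fails: if $v$ is an eigenvector of $\beta_p$, the adapted Jacobi field with $J(0)=v$ decomposes in the $R_X$-eigenframe $(E_1,E_2)$ into components satisfying $J_1''+\Delta J_1=0$ and $J_2''+\delta J_2=0$, and when $\Delta\ne\delta$ these vanish at different times, so $J$ need not vanish anywhere and Corollary \ref{cor:jacobicor} gives nothing. Likewise the trace identity behind Theorem \ref{thm:inducedcontact} is too weak here, since $\delta$ --- and hence $\Rc X=\Delta+\delta$ --- may be strongly negative under hypothesis (i); one genuinely needs the scalar Riccati inequality in the single maximal-curvature direction $E_1$ rather than anything involving $\Rc X$. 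Once that is in place, the finite-time blow-up of scalar Riccati equations does all the work, in direct analogy with (but replacing) the ``vanishing Jacobi field'' contradiction used for space forms.
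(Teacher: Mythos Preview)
Your argument is correct, and it is genuinely different from --- and considerably shorter than --- the paper's proof. The paper works with two adapted Jacobi fields $J,\tilde J$ (initial values the eigenvectors of $\beta_p$), expresses their components in the parallel $R_X$-eigenframe $(E_1,E_2)$ explicitly via $\cos/\sin$ and $\cosh/\sinh$, and studies the determinant $A(t)=J_1\tilde J_2-J_2\tilde J_1$. For $\Delta>0$ the oscillatory factor forces $A$ to change sign, contradicting Corollary \ref{cor:jacobicor}; for $\Delta=0$ an auxiliary lemma identifies $A(t)=\exp\bigl(\int_0^t\tr\beta\bigr)$, and a four-case analysis of the explicit coefficients $c_1^\pm,c_2^\pm$ eventually forces $\rank\beta_p\le 1$. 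Your approach sidesteps all of this: instead of the trace (which drags in $\Rc X=\Delta+\delta$ and hence the possibly very negative $\delta$), you project the matrix Riccati equation onto the single maximal-curvature direction $E_1$, obtaining the scalar inequality $a'\le -a^2-\Delta$ directly from self-adjointness of $\beta$ along $\gamma$ and Cauchy--Schwarz. This is exactly the refinement of the Aazami/Harris--Paternain trace argument that the situation calls for, and it makes the case $\Delta=0$ essentially a one-liner (from $a\equiv 0$ and $a'=-|\beta E_1|^2$ you get $\beta_pe_1=0$). What the paper's Jacobi-field approach buys is a more geometric picture tying back to Corollary \ref{cor:jacobicor} and the space-form story; what your approach buys is brevity, no auxiliary lemma, no case split on the signs of $c_i^\pm$, and no need to first reduce to $\delta<0$ via Corollary \ref{cor:ricpos}.
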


\begin{proof}[Proof of Theorem \ref{thm:locallysymmcontact}]
We argue by contradiction. Assume that $X^\perp$ is not contact at $p$. Let $\Delta$ and $\delta$ be the eigenvalues of $R_{X_p}$ as in (\ref{eq:eigenvaluesrx}). Then $\Delta \geq 0$ by our assumption on the sectional curvature. We may also assume that $\delta < 0$, for otherwise $\Rc X_p \geq 0$, in which case Corollary \ref{cor:ricpos} applies. As before, $X^\perp_p$ admits an orthonormal basis $v,w$ consisting of eigenvectors of $\beta_p$ corresponding to real eigenvalues $\lambda$ and $\mu$. Let $J$ and $\tilde{J}$ be the unique Jacobi fields through $p$ adapted to $X$ such that $J(0) = v$ and $\tilde{J}(0) = w$. Then, by Proposition \ref{prop:jacobiprop},
\[
J^\prime(0) = \beta_p(v) = \lambda v
\]
and 
\[
\tilde{J}^\prime(0) = \beta_p(w) = \mu w.
\]
Now, let us assume first that $\Delta > 0$. Writing $J = J_1 E_1 + J_2 E_2$ and $\tilde{J} = \tilde{J}_1 E_1 + \tilde{J}_2 E_2$ we obtain, using (\ref{eq:jacobi}),
\[
\begin{cases}
    J_1 = v_1 \left(\cos(\sqrt{\Delta}t) + \frac{\lambda}{\sqrt{\Delta}}\sin(\sqrt{\Delta}t)\right),\vspace{5pt}\\
    J_2 = v_2 \left(\cosh(\sqrt{|\delta|}t) + \frac{\lambda}{\sqrt{|\delta|}} \sinh(\sqrt{|\delta|}t)\right),
\end{cases}
\]
and similarly,
\[
\begin{cases}
    \tilde{J}_1 = w_1 \left(\cos(\sqrt{\Delta}t) + \frac{\mu}{\sqrt{\Delta}}\sin(\sqrt{\Delta}t)\right),\vspace{5pt}\\
    \tilde{J}_2 = w_2 \left(\cosh(\sqrt{|\delta|}t) + \frac{\mu}{\sqrt{|\delta|}} \sinh(\sqrt{|\delta|}t)\right).
\end{cases}
\]
We should stress at this point that here, $\lambda$ and $\mu$ (as well as $\Delta$ and $\delta$) are constants instead of functions of $t$ as in the proof of Theorem \ref{thm:contgeoconst}.
Now consider the function
\begin{equation}\label{eq:defphi}
\R \ni t \longmapsto A(t) := \det \begin{pmatrix} J_1(t) & \tilde{J}_1(t) \\ J_2(t) & \tilde{J}_2(t) \end{pmatrix}.
\end{equation}
Using $\sinh(t) = (\rme^t-\rme^{-t})/2$ and $\cosh(t) = (\rme^t+\rme^{-t})/2$, we compute
\[
A(t) = \frac{\rme^{\sqrt{\abs{\delta}}t}}{2}\left(c_1^+ \cos(\sqrt{\Delta}t) + c_2^+ \sin(\sqrt{\Delta}t)\right) + \frac{\rme^{-\sqrt{\abs{\delta}}t}}{2}\left(c_1^- \cos(\sqrt{\Delta}t) + c_2^- \sin(\sqrt{\delta}t)\right),
\]
where
\[
c_1^\pm = 1 \pm \frac{v_1w_2 \mu - v_2 w_1 \lambda}{\sqrt{|\delta|}}, \quad c_2^\pm = \frac{v_1w_2 \lambda - v_2 w_1 \mu}{\sqrt{\Delta}} \pm \frac{\lambda \mu}{\sqrt{\Delta |\delta|}}.
\]
Note that since $A$ does not vanish identically (since $A(0) = 1$), one of the constants $c_1^\pm, c_2^\pm$ is nonzero. Say $c_1^+ > 0$ (the other cases being similar), then $A(t_n) < 0$ for $t_n := (\pi+2\pi n)/\sqrt{\Delta}$ and $n > 0$ large. Hence, by the intermediate value theorem, there is $t_0 \in \R$ with $A(t_0) = 0$, i.e.\, $J(t_0)$ and $\tilde{J}(t_0)$ are linearly dependent. But then it follows from Corollary \ref{cor:jacobicor} that $J(t)$ and $\tilde{J}(t)$ are linearly dependent for every $t$, which is a contradiction.

Now consider the case $\Delta = 0$. In this case, it follows from (\ref{eq:jacobi}) that
\[
\begin{cases}
    J_1 = v_1 \left(1+ \lambda t\right),\vspace{5pt}\\
    J_2 = v_2 \left(\cosh(\sqrt{|\delta|}t) + \frac{\lambda}{\sqrt{|\delta|}} \sinh(\sqrt{|\delta|}t)\right),
\end{cases}
\]
as well as
\[
\begin{cases}
    \tilde{J}_1 = w_1 \left(1+ \mu t \right),\vspace{5pt}\\
    \tilde{J}_2 = w_2 \left(\cosh(\sqrt{|\delta|}t) + \frac{\mu}{\sqrt{|\delta|}} \sinh(\sqrt{|\delta|}t)\right).
\end{cases}
\]
Then
\[
A(t) = \frac{\rme^{\sqrt{\abs{\delta}}t}}{2}\left(c_1^+ + c_2^+t\right) + \frac{\rme^{-\sqrt{\abs{\delta}}t}}{2}\left(c_1^-+ c_2^- t\right),
\]
where now
\[
c_1^\pm = 1 \pm \frac{v_1w_2 \mu - v_2 w_1 \lambda}{\sqrt{|\delta|}}, \quad c_2^\pm = v_1w_2 \lambda - v_2 w_1 \mu\pm \frac{\lambda\mu}{\sqrt{\abs{\delta}}}.
\]
It follows that $c_2^+ \geq 0$ and $c_2^- \leq 0$ (otherwise, one can argue again that $A$ has to take negative values, which yields a contradiction). Hence
\[
\frac{2\lambda\mu}{\sqrt{|\delta|}} = c_2^+ - c_2^- \geq 0 \Longrightarrow \lambda \mu \geq 0.
\]
Now let $t \mapsto \gamma(t)$ be the parametrised orbit of $X$ through $p$. Then, by Remark \ref{rmk:wholeorbitnotcontact} (i), $X^\perp$ is not contact along all of $\gamma$. Hence, if $\lambda_t,\, \mu_t$ denote the eigenvalues of $\beta_t = \beta_{\gamma(t)}$, then the argument above shows that 
\begin{equation}\label{eq:lambdatmut}
    \lambda_t \mu_t \geq 0 \quad \text{for all } t \in \R.
\end{equation}
On the other hand, by Lemma \ref{lem:phi} below, $A$ is given by $A(t) = \rme^{B(t)}$, where $B$ is a primitive of $t \mapsto \tr \beta_t$. Comparing these two description yields
\begin{equation}\label{eq:phieq}
\frac{\rme^{\sqrt{\abs{\delta}}t}}{2}\left(c_1^+ + c_2^+t\right) + \frac{\rme^{-\sqrt{\abs{\delta}}t}}{2}\left(c_1^-+ c_2^- t\right) = \rme^{B(t)}.   
\end{equation}
Taking derivatives on both sides, we obtain
\[
 \frac{\rme^{\sqrt{\abs{\delta}}t}}{2}\left(\sqrt{\abs{\delta}}\left(c_1^+ + c_2^+t\right) + c_2^+\right) + \frac{\rme^{-\sqrt{\abs{\delta}}t}}{2}\left(c_2^- -\sqrt{\abs{\delta}} \left(c_1^- + c_2^- t\right)\right) = \left(\tr \beta_t\right)\rme^{B(t)},
\]
hence 
\begin{equation}\label{eq:tracebetat}
\tr \beta_t = \frac{\frac{\rme^{\sqrt{\abs{\delta}}t}}{2}\left(\sqrt{\abs{\delta}}\left(c_1^+ + c_2^+t\right) + c_2^+\right) + \frac{\rme^{-\sqrt{\abs{\delta}}t}}{2}\left(c_2^- -\sqrt{\abs{\delta}} \left(c_1^-+ c_2^- t\right)\right)}{\rme^{B(t)}}.    
\end{equation}
Recall that $c_2^+ \geq 0$ and $c_2^- \leq 0$. At this point, we have to distinguish several cases.\\[5pt]
\textbf{\underline{Case 1: $c_2^+ > 0$ and $c_2^- < 0$.}} In that case, by (\ref{eq:tracebetat}), $\tr \beta_t$ takes positive values for large positive $t$ and negative values for large negative $t$. In particular, $\tr \beta_t = 0$ for some $t$. But then $\lambda_t = -\mu_t$, hence, using (\ref{eq:lambdatmut}), 
\[
0 \leq \lambda_t\mu_t = - \lambda_t^2 \Longrightarrow \lambda_t = \mu_t = 0,
\]
so $\beta_t = 0$ which yields a contradiction.\\[5pt]
\textbf{\underline{Case 2: $c_2^+ = 0$ and $c_2^- < 0$.}} In that case we must have $c_1^+ > 0$ for otherwise $A$ takes negative values for large positive $t$, which gives a contradiction as before. But then again we have that $\tr \beta_t > 0$ for $t > 0$ large, and $\tr \beta_t < 0$ for $t<0$ large, which gives the same contradiction as in the first case.\\[5pt]
\textbf{\underline{Case 3: $c_2^+ > 0$ and $c_2^- = 0$.}} This is completely analogous to Case 2.\\[5pt]
\textbf{\underline{Case 4: $c_2^+ = 0$ and $c_2^- = 0$.}} In that case we have that
\[
0 = c_2^+ - c_2^- = \frac{2\lambda \mu}{\sqrt{|\delta|}} \Longrightarrow \lambda\mu = 0,
\]
hence $\lambda = 0$ or $\mu = 0$, so that $\rank \beta \leq 1$, contradicting the assumption. 

These are all possible cases, thus the proof is finished.\qedhere
\end{proof}
\begin{lemma}\label{lem:phi}
Assume that $X^\perp$ is not contact at $p$ and let $A$ be as in (\ref{eq:defphi}). Then 
\[
A(t) = \exp\left(\int_0^t \tr \beta(s) \drm s\right).
\]
\end{lemma}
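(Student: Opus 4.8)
The plan is to recognise $A$ as the determinant of a fundamental matrix of a linear first-order ODE and then invoke Liouville's formula.

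First I would keep the notation from the proof of Theorem~\ref{thm:locallysymmcontact}: $\gamma$ is the integral curve of $X$ with $\gamma(0)=p$; $E_1,E_2$ is the orthonormal frame of $X^\perp$ along $\gamma$, parallel with $E_i(0)=e_i$; and $J=J_1E_1+J_2E_2$, $\tilde J=\tilde J_1E_1+\tilde J_2E_2$ are the adapted Jacobi fields with $J(0)=v$, $\tilde J(0)=w$, where $(v,w)$ is an orthonormal basis of $X^\perp_p$ consisting of eigenvectors of $\beta_p$ --- such a basis exists since $\beta_p$ is self-adjoint, $X^\perp$ being non-contact at $p$, cf.\ Proposition~\ref{prop:contactcondequiv}. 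Writing $v=v_1e_1+v_2e_2$ and $w=w_1e_1+w_2e_2$, the number $A(0)=v_1w_2-v_2w_1$ is the determinant of the (orthogonal) change-of-basis matrix between the two orthonormal bases, hence equal to $\pm1$; replacing $v$ by $-v$ if necessary, I may assume $A(0)=1$, in agreement with the proof of Theorem~\ref{thm:locallysymmcontact}.

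Next I would set up the governing ODE. Put $\Phi(t):=\begin{pmatrix}J_1(t)&\tilde J_1(t)\\ J_2(t)&\tilde J_2(t)\end{pmatrix}$ and let $M(t)$ denote the matrix of $\beta_{\gamma(t)}\colon X^\perp_{\gamma(t)}\to X^\perp_{\gamma(t)}$ with respect to the basis $(E_1(t),E_2(t))$. Since the $E_i$ are parallel along $\gamma$, we have $\nabla_XJ=J_1'E_1+J_2'E_2$; on the other hand, Proposition~\ref{prop:jacobiprop} gives $\nabla_XJ=\beta(J)$, whose coordinate column in the frame $(E_1,E_2)$ is precisely $M(t)(J_1,J_2)^{\!\top}$. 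Comparing coordinates --- and arguing identically for $\tilde J$ --- yields the matrix ODE $\Phi'=M\Phi$.

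Finally, Liouville's formula applied to $\Phi'=M\Phi$ gives $(\det\Phi)'=(\tr M)\det\Phi$, so
\[
A(t)=\det\Phi(t)=\det\Phi(0)\,\exp\!\left(\int_0^t\tr M(s)\,\drm s\right).
\]
Because the trace does not depend on the chosen basis, $\tr M(s)=\tr\beta_{\gamma(s)}=\tr\beta(s)$, and $\det\Phi(0)=A(0)=1$ by the normalisation above; this is exactly the asserted identity. I do not anticipate any serious obstacle: the only two points requiring a little care are the identification $\Phi'=M\Phi$ (which relies on the $E_i$ being parallel together with Proposition~\ref{prop:jacobiprop}) and fixing the sign $A(0)=1$; everything else is the standard determinant-of-a-fundamental-matrix computation for a linear system.
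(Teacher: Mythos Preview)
Your argument is correct and, in fact, cleaner than the paper's own proof. The essential identity $A'=(\tr\beta)\,A$ is the same, but the two proofs arrive at it differently.

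The paper diagonalises $\beta_t$ for every $t$: using that $X^\perp$ fails to be contact along all of $\gamma$, it produces an orthonormal eigenframe $(V_t,W_t)$ of $\beta_t$, writes $J,\tilde J$ in this frame, and computes $A'=(\lambda_t+\mu_t)A$ directly. The price is that the eigenframe need not vary smoothly when the eigenvalues collide, so the paper has to restrict to an open dense set of parameter values and then extend by continuity. Your approach sidesteps this entirely: since $(E_1,E_2)$ is parallel, Proposition~\ref{prop:jacobiprop} immediately gives the linear system $\Phi'=M\Phi$, and Liouville's formula does the rest. In particular, you never need $\beta_t$ to be self-adjoint for $t\neq 0$; the hypothesis that $X^\perp$ is not contact at $p$ enters only through the definition of the specific $J,\tilde J$ (and the orthonormality of $(v,w)$, giving $A(0)=\pm1$). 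So your proof is both shorter and slightly more general; the paper's route, on the other hand, makes the eigenvalue decomposition $\tr\beta_t=\lambda_t+\mu_t$ explicit, which is conceptually in line with how the surrounding proof of Theorem~\ref{thm:locallysymmcontact} is organised.
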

\begin{proof}
Since $\alpha$ is invariant under the flow of $X$, the plane field $X^\perp$ is not contact along the whole orbit $\gamma$ through $p$. Hence, by Proposition \ref{prop:contactcondequiv}, for every $t \in \R$ there is an orthonormal basis $V = V_t, \, W = W_t$ of $X^\perp_{\gamma(t)}$ consisting of eigenvectors of $\beta_t$ corresponding to real eigenvalues $\lambda = \lambda_t$ and $\mu = \mu_t$, respectively. Let us assume for the moment that we can choose $V$ and $W$ to depend smoothly on $t$. Then
\[
0 = X \langle V, X \rangle = \langle \nabla_X V, X\rangle
\]
as well as 
\[
0 = X \langle V,V\rangle = 2 \langle \nabla_X V ,V \rangle.
\]
It follows that $\nabla_X V = a W$ for some function $t \mapsto a(t)$. Since
\[
0 = X \langle V, W \rangle = a + \langle V, \nabla_X W \rangle
\]
we find that $\nabla_X W = -a V$. Now write
\[
J = J_1 V + J_2 W, \quad \tilde{J} = \tilde{J}_1 V + \tilde{J}_2 W
\]
for some functions $J_i, \tilde{J_i} \col \R \to \R$.
Then
\[
\lambda J_1V + \mu J_2 W = \beta(J) = J^\prime = (J^\prime_1-aJ_2)V + (J^\prime_2 + aJ_1) W,
\]
so that
\begin{equation}
\begin{cases}
J_1^{\prime} = \lambda J_1 + a J_2.\\
J_2^{\prime} = - a J_1 + \mu J_2.
\end{cases}
\end{equation}
Similarly, we obtain
\[
\begin{cases}
\tilde{J}_1^{\prime} = \lambda \tilde{J}_1 + a \tilde{J}_2.\\
\tilde{J}_2^{\prime} = - a \tilde{J}_1 + \mu \tilde{J}_2.
\end{cases}
\]
Therefore,
\begin{equation}\label{eq:phi}
\begin{aligned}
A^\prime &= J_1^\prime \tilde{J}_2 + J_1 \tilde{J}_2^\prime - J_2^\prime \tilde{J}_1 - J_2 \tilde{J}_1^\prime\\
&= (\lambda + \mu) A(t)\\
&= (\tr \beta) A. 
\end{aligned}
\end{equation}
At this point we still have to deal with the issue of smoothness of the vector fields $V$ and $W$. It turns out that we cannot, in general, choose $V$ and $W$ to depend smoothly on $t$ for all $t \in \R$, but only on an open, dense subset $U \subset \R$ (which we are going to define in a second). But this is already good enough for our purpose, because in this case, by continuity, $A$ satisfies (\ref{eq:phi}) on \emph{all} of $\R$. Then, since $A(0) = 1$, we must have 
\[
A(t) = \exp\left(\int_0^t \tr \beta (s) \drm s\right)
\]
globally, as claimed. In order to find the subset $U$, let $U_1 := \set{t \in \R \col \lambda(t) \neq \mu(t)}$. Then, for all $t \in U_1$, there is a unique decomposition of $X^\perp_{\gamma(t)}$ into eigenspaces of $\beta$. Then $V$ and $W$ can be chosen to depend smoothly on $t$ for all $t\in U_1$ (see \cite[Chapter 9, Theorem 8]{lax:1997}). On the open subset $U_2 := \R \setminus \overline{U_1}$ we have that $\lambda(t) = \mu(t)$ and $\beta = \lambda \id$, so that every vector is an eigenvector. So on $U_2$, too, we can choose $V$ and $W$ smoothly. Then $U := U_1\sqcup U_2$ does the job. \qedhere
\end{proof}
The following two examples show that we cannot, in general, drop the assumptions $\max K(v,X) \geq 0$ or $\rank \beta = 2$ if $K(v,X) = 0$ in Theorem \ref{thm:locallysymmcontact}.
\begin{ex}\label{ex:countertolocallysymmetrictheorem}
\begin{enumerate}[(i)]
\item  Consider $M = \HH^2 \times \R$, where we use the Poincar\'e half-plane model for $\HH^2$, that is, $\HH^2 = \{(x_1,x_2) \in \R^2 \col x_2 > 0\}$ with the metric 
\[
g = \frac{\drm x_1^2 + \drm x_2^2}{x_2^2},
\]
and $M$ is equipped with the product metric. Let $x_3$ be the $\R$-coordinate and consider the geodesic vector field $X = x_2 \partial_{x_2}$. Then $X^\perp$ is spanned by $\partial_{x_1}$ and $\partial_{x_3}$, hence it is tangent to the fibration of affine planes given by $\{x_2 = \text{const.}\}$. Note that $K(\partial_{x_3},X) = 0$, so $X$ satisfies the condition $\max K(X,\cdot) = 0$ in Theorem \ref{thm:locallysymmcontact} (ii). A simple computation yields
\[
\beta(\partial_{x_1}) = - x_2^{-1}\partial_{x_1}, \quad \beta(\partial_z) = 0,
\]
so that $\rank \beta = 1$. Now since $X^\perp$ does not define a contact structure, this shows that we do need $\beta$ to have full rank in Theorem \ref{thm:locallysymmcontact} (ii).

\item\label{ex:hyperbolicgeodesicvf} Consider the half-space model of hyperbolic $3$-space, that is,
\[
\HH^3 := \{(x_1,x_2,x_3) \in \R^3 \col x_3 > 0\}
\]
equipped with the metric 
\[
g := \frac{\drm x_1^2+\drm x_2^2+\drm x_3^2}{x_3^2}.
\]
Let $X$ be the geodesic vector field on $\HH^3$ defined by $X := x_3 \, \partial_{x_3}$.
Then $X^\perp$ is spanned by $\partial_{x_1}$ and $\partial_{x_2}$ and hence tangent to the affine planes $\{x_3 = \text{const.}\}$. Furthermore, one computes $\beta(\partial_{x_1}) = - \partial_{x_1}$ and $\beta(\partial_{x_2}) = - \partial_{x_2}$, hence $\beta = - \id$ (in particular, $\rank \beta = 2$). However, $X$ does not satisfy the assumption on the sectional curvature in Theorem \ref{thm:locallysymmcontact}, since $K(\partial_{x_2},X) = K(\partial_{x_3},X) = -1 < 0$.
\end{enumerate}
\end{ex}

\section{Reeb flows and tightness}\label{section:tightness}
Recall that a contact structure $\xi$ on a $3$-manifold $M$ is called \textbf{overtwisted} if it contains an overtwisted disc; that is, if there is an embedded copy $\Delta \subset M$ of $D^2$ such that $T_p \Delta = \xi_p$ for all $p \in \partial \Delta$. If $\xi$ is not overtwisted, it is called \textbf{tight}. Moreover, $\xi$ is called \textbf{universally tight} if the pullback of $\xi$ to the universal cover of $M$ defines a tight contact structure. Note that universal tightness is in general stronger than tightness; there are examples of tight contact structures covered by overtwisted ones, also known as \textit{virtually overtwisted} contact structures.

In this section, we want to derive some tightness criteria in terms of properties of the Reeb flow associated with a contact form. A prominent example of this type is Hofer's criterion: If the Reeb flow of a contact form on a closed $3$-manifold does not admit any closed contractible orbit, then the underlying contact structure must be tight \cite{hofer:1993} (in this case also called \textit{hypertight}). The following result, which we will prove using much more elementary methods, resembles this statement in the case of $M$ being open. 

\begin{thm}\label{thm:freereebflow}
Let $(M,\xi)$ be a $3$-dimensional contact manifold without boundary. Assume that there is a contact form $\alpha$ defining $\xi$ whose Reeb flow defines a free and proper $\R$-action on $M$. Then the universal cover of $M$ is $\R^3$, and $\xi$ is universally tight.
\end{thm}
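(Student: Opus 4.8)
The plan is to first read off the diffeomorphism type of $M$ from the dynamics of the Reeb flow, and then reduce universal tightness to the tightness of the standard contact structure on $\R^3$.

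First, since the Reeb flow of $\alpha$ is a free and proper $\R$-action, the quotient manifold theorem produces a principal $\R$-bundle $\pi\col M\to B:=M/\R$, where $B$ is a (Hausdorff, boundaryless) surface; Hausdorffness of the quotient is exactly where properness enters. As $\R$ is contractible this bundle is trivial, so I fix an equivariant diffeomorphism $M\cong B\times\R$ under which the Reeb field is $\partial_t$. Because $\alpha(\partial_t)=1$ and $L_{\partial_t}\alpha=0$ (the Reeb conditions, see Lemma \ref{lem:geodesiccharact}), the $1$-form $\alpha-\drm t$ is basic, hence of the form $\pi^*\lambda$ with $\lambda\in\Omega^1(B)$, and therefore $\drm\alpha=\pi^*\drm\lambda$. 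Writing $\omega:=\drm\lambda$, the contact condition $\alpha\wedge\drm\alpha\neq 0$ becomes exactly the condition that $\omega$ be an area form on $B$; in particular $B$ is orientable, and since $\omega$ is exact it cannot be closed (an exact $2$-form integrates to zero). Thus $B$ is a connected noncompact boundaryless surface, whose universal cover is diffeomorphic to $\R^2$, and hence $\widetilde M\cong\widetilde B\times\R\cong\R^3$.

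For tightness, $\alpha$ lifts to $\widetilde\alpha=\drm t+\widetilde\lambda$ on $\widetilde M\cong\R^2\times\R$, where $\widetilde\lambda$ is the pullback of $\lambda$ to $\widetilde B\cong\R^2$ and $\sigma:=\drm\widetilde\lambda$ is an area form; so $(\widetilde M,\widetilde\alpha)$ is the contactization of the exact symplectic surface $(\widetilde B,\widetilde\lambda)$. By the classification of area forms on open surfaces (Greene--Shiohama), $(\widetilde B,\sigma)$ admits a symplectic embedding $\phi$ into $(\R^2,\omega_0)$ with $\omega_0=\drm x\wedge\drm y$ (a symplectomorphism if $\sigma$ has infinite total area, and onto a standard round disc if the area is finite). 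Then $\widetilde\lambda-\phi^*(x\,\drm y)$ is closed on the simply connected surface $\widetilde B$, hence equals $\drm g$ for some function $g$, and $\Phi(b,t):=(\phi(b),\,t+g(b))$ satisfies $\Phi^*(\drm z+x\,\drm y)=\drm t+\widetilde\lambda=\widetilde\alpha$. Thus $\Phi$ is a contactomorphism of $(\widetilde M,\widetilde\xi)$ onto an open subset of $(\R^3,\ker(\drm z+x\,\drm y))=(\R^3,\xi_{\std})$.

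Finally, an overtwisted disc is compact, so any such disc in $\widetilde M$ would be contained in $D\times(-T,T)$ for a large closed disc $D\subset\widetilde B\cong\R^2$; applying the previous paragraph to a slightly larger open disc transports this overtwisted disc into $(\R^3,\xi_{\std})$, contradicting Bennequin's theorem. Hence $\widetilde\xi$ is tight, i.e.\ $\xi$ is universally tight. I expect the main obstacle to be the bookkeeping in the contactization step: upgrading a symplectic embedding of the bases to an honest contactomorphism of the contactizations, which dictates both the choice of primitive $x\,\drm y$ and the correction function $g$, together with the use of the standard (but not wholly trivial) fact that area forms on open surfaces are classified by their total area, so that the base really does embed symplectically into the standard plane.
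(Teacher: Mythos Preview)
Your proof is correct and takes a genuinely different route from the paper's. Both arguments begin the same way---trivialise the principal $\R$-bundle, observe that $\drm\alpha$ descends to an exact area form on the base, conclude the base is open and hence $\widetilde M\cong\R^3$---but they diverge sharply on the tightness step.

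The paper works in explicit cylindrical coordinates on $\R^3$: it writes $\alpha=\drm z+a\,\drm\theta+b\,\drm r$ with $a,b$ independent of $z$, then for each radius $R$ interpolates $\alpha$ with a rotationally symmetric form $\drm z+\hat a(r)\,\drm\theta$ outside $\{r\le R\}$, and runs a Moser argument on the resulting family of contact forms (which share the Reeb field $\partial_z$) to identify the result with the standard form $\drm z+r^2\,\drm\theta$. This is entirely self-contained but somewhat laborious.

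Your argument instead recognises $(\widetilde M,\widetilde\alpha)$ as the contactization of the exact symplectic surface $(\widetilde B,\widetilde\lambda)$, invokes Greene--Shiohama to symplectically embed $(\widetilde B,\drm\widetilde\lambda)$ into $(\R^2,\drm x\wedge\drm y)$, and then lifts this to a strict contact embedding into $(\R^3,\drm z+x\,\drm y)$ via the primitive-correction $g$. This is cleaner and more conceptual, trading the explicit Moser computation for a single appeal to the (nontrivial) classification of volume forms on open manifolds. One minor remark: your final paragraph about restricting to $D\times(-T,T)$ is redundant, since your $\Phi$ is already a global contact embedding of $\widetilde M$ into $(\R^3,\xi_{\std})$; any overtwisted disc is carried directly into the standard $\R^3$ without further localisation.
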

\begin{proof}
By passing to the universal cover, we may assume that $M$ is already simply connected. Since the $\R$-action defined by the flow of $R := R_{\alpha}$ is free and proper, the quotient space $\Sigma := M/\R$ is an orientable surface, and the projection $M \to \Sigma$ defines an oriented line bundle which is necessarily trivial. Hence $M \cong \Sigma \times \R$, and the flow lines of $R$ correspond to the $\R$-fibres under this identification. Note that $\int_{\Sigma} \drm \alpha \neq 0$ as $\alpha$ is a contact form. In particular, $\Sigma$ cannot be closed, for otherwise we obtain a contradiction to Stokes' theorem. Then, since $\Sigma$ must be simply connected, using the uniformisation theorem we deduce that $\Sigma$ is homeomorphic to $\R^2$, and $M \cong \R^2 \times \R = \R^3$. Choose global coordinates $(x,y,z)$ of $M = \R^2 \times \R$. Then $R$ can be written as $\lambda \, \partial_z$ for some function $\lambda \col M \to \R^+$. Now let $\Phi = \Phi_t$ denote the flow of $R$, and consider the diffeomorphism 
\[
\R^3 \longrightarrow \R^3, \quad (x,y,z) \longmapsto \Phi_z(x,y,0).
\]
The pullback of $\alpha$ by this diffeomorphism is a contact form (again called $\alpha$) whose Reeb vector field is given by $\partial_z$. Therefore, in cylindrical coordinates $(r,\theta,z)$ of $\R^3$, $\alpha$ is given by
\[
\alpha = \drm z + a \, \drm \theta + b \, \drm r
\]
for some functions $a, b \col M \to \R$. Then $i_{\partial_z} \drm \alpha = 0$ translates into $\partial_z a = \partial_z b = 0$, and we compute
\[
\alpha \wedge \drm \alpha = (\partial_r a - \partial_{\theta} b)\, \drm z \wedge \drm \theta \wedge \drm r,
\]
so that the contact condition becomes $\partial_r a - \partial_{\theta} b \neq 0$ for $r \neq 0$. Without loss of generality, we may assume that $\partial_r a - \partial_{\theta} b > 0$. 

Clearly it suffices to show that $\xi$ does not contain an overtwisted disc when restricted to a subset of the form $\{r < R \}$ with $R > 0$. Given such an $R$, consider a function $\hat{a} = \hat{a}(r) \col M \to \R^+$ such that $\hat{a}^\prime > 0$ and $\hat{a} > a$ everywhere (note that such a function exists since $a$ does not depend on $z$). Consider a bump function 
\[
\Psi = \Psi(r) \col M \longrightarrow [0,1]
\]
which is equal to $0$ on $\{r \leq R\}$ and equal to $1$ on $\{r \geq R + \varepsilon\}$ for some $\varepsilon > 0$. Now consider the 1-form 
\[
\tilde{\alpha} := \drm z + \underbrace{((1-\Psi)a + \Psi \hat{a})}_{=: \tilde{a}} \, \drm \theta + \underbrace{(1-\Psi) b}_{=:\tilde{b}} \, \drm r
\]
Note that $\partial_z \tilde{a} = \partial_z \tilde{b} = 0$, and 
\[
\partial_r \tilde{a} - \partial_{\theta} \tilde{b} = (1-\Psi)\underbrace{(\partial_r a - \partial_{\theta} b)}_{>0} + \Psi^\prime \underbrace{(\hat{a} - a)}_{> 0} + \Psi \underbrace{\hat{a}^\prime}_{>0} > 0 \quad \text{for } r > 0,
\]
hence $\tilde{\alpha}$ is a contact form on $\{r > 0\}$. It also also contact at the origin, since $\tilde{\alpha} = \alpha$ on $\{r \leq R\}$. The Reeb vector field of $\tilde{\alpha}$ is given by $R_{\tilde{\alpha}} = \partial_z$.

Now consider a function $h = h(r) \col M \to \R$ satisfying the following:
\begin{itemize}
    \item $h^\prime(r) > 0$ for all $r$;
    \item $h(r) = r^2$ for $r$ close to $0$;
    \item $h(r) = \tilde{a}(r)$ for $r \geq R + \varepsilon$.
\end{itemize}
Then, the 1-form $\beta := \drm z + h \, \drm \theta$ is a contact form with the same Reeb vector field as $\tilde{\alpha}$, namely $R_{\beta} = \partial_z$. We now want to show that $\tilde{\alpha}$ and $\beta$ are diffeomorphic contact forms, using a Moser type argument. Consider the $1$-parameter family of 1-forms
\[
\alpha_t = (1-t) \, \tilde{\alpha} + t \beta, \quad t \in [0,1].
\]
Using $R_{\tilde{\alpha}} = R_{\beta} = \partial_z$, we compute
\[
\alpha_t \wedge \drm \alpha_t(\partial_x,\partial_y,\partial_z) = \drm \alpha_t(\partial_x,\partial_y) = (1-t) \underbrace{\drm\tilde{\alpha}(\partial_x,\partial_y)}_{>0} + t \underbrace{\drm \beta(\partial_x,\partial_y)}_{>0} > 0,
\]
hence $\alpha_t$ is a contact form for every $t \in [0,1]$. We now want to find an isotopy $\Phi_t$, given by the time-$t$ flow of some vector field $X_t$, such that 
\begin{equation}\label{eq:phit}
\Phi_t^*\alpha_t = \tilde{\alpha}.
\end{equation}
In particular, this will imply that $\tilde{\alpha}$ and $\beta$ are diffeomorphic, as $\tilde{\alpha} =  \Phi_1^*\alpha_1 = \Phi_1^* \beta$. In order to find $X_t$, we differentiate equation (\ref{eq:phit}) with respect to $t$ to obtain
\[
0 = \Phi_t^*(\dot{\alpha_t} + L_{X_t} \alpha_t) = \Phi_t^*(\dot{\alpha_t} + \drm (\alpha_t(X_t)) + i_{X_t} \drm \alpha_t).
\]
Thus it suffices to find $X_t$ satisfying 
\[
\dot{\alpha_t} + \drm (\alpha_t(X_t)) + i_{X_t} \drm \alpha_t = 0.
\]
Choosing $X_t \in \ker \alpha_t$, this translates into
\begin{equation}\label{eq:moser}
i_{X_t} \drm \alpha_t = \tilde{\alpha} - \beta
\end{equation}
Now since $R_{\alpha_t} = \partial_z$ for all $t$, it follows that $\dot{\alpha_t}(R_{\alpha_t}) = (\beta-\tilde{\alpha})(\partial_z) = 0$, hence equation (\ref{eq:moser}) has a unique solution $X_t \in \ker \alpha_t$ by the nondegeneracy of $\drm \alpha_t$. Note that $X_t$ is of the form $X^1_t \, \partial_{\theta} + X^2_t \, \partial_r$ for some functions $X^1_t,X^2_t \col M \to \R$ whose support is contained in $\{r \leq R + \varepsilon\}$, since 
\[
\restr{\left(\tilde{\alpha}-\beta\right)}{\{r \geq R +\varepsilon\}} = 0. 
\]
Thus, the time-$t$ flow of $X_t$ exists for all $t$, which yields the desired isotopy $\Phi_t$. It follows that $\tilde{\alpha}$ and $\beta$ are diffeomorphic contact forms. On the other hand, by our assumptions on the function $h$, we can find a diffeomorphism $\rho \col [0,\infty) \to [0,\infty)$ (with $\rho = \id$ close to $0$) such that $(h \circ \rho)(r) = r^2$. Hence, the pullback of $\beta$ under the diffeomorphism $(r,\theta,z) \mapsto (\rho(r),\theta,z)$ of $\R^3$ is given by $\drm z + r^2 \, \drm \theta$, the standard contact form on $\R^3$, whose kernel defines a tight contact structure (see \cite[Cor. 6.5.10]{geiges:2008}). This shows that $\restr{\xi}{\{r < R\}}$ is tight and thus proves the claim. \qedhere

In particular, together with Remark \ref{rmk:wholeorbitnotcontact} (ii), we obtain the following generalisation of \cite[Theorem 1]{beckergeiges:2021}.
\begin{cor}\label{cor:linetight}
    Let $\alpha$ be a contact form on $\R^3$ whose Reeb vector field spans a fibration by lines. Then $\ker \alpha$ is tight. In particular, every contact structure induced by a line fibration of $\R^3$ is tight.\qed
\end{cor}

\end{proof}
The following statement is probably known, but --- to the best of the author's knowledge --- nowhere stated explicitly in the literature. Hence we will provide a proof.
\begin{prop}\label{prop:periodictight}
    Let $\xi$ be a contact structure on a $3$-manifold $M$ admitting a Reeb vector field with closed orbits only. Then $\xi$ is universally tight.
\end{prop}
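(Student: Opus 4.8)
The plan is to reduce to the case where the Reeb flow is periodic with a fixed period, then exhibit the contact manifold as a Boothby–Wang type bundle over an orbifold and appeal to known tightness results for such contact structures.

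First I would invoke the structure theory for contact forms all of whose Reeb orbits are closed. By a theorem of Wadsley (see \cite{wadsley:1975}), if every orbit of a complete vector field is closed, then on the universal cover (or after passing to an appropriate metric, using that $R$ is geodesible as in Example \ref{ex:reeb}) the flow is in fact periodic, i.e.\ there is a common period $T > 0$. Thus the flow of $R$ defines an $S^1$-action on $M$ (or on its universal cover $\widetilde{M}$, to which $\alpha$ and $R$ lift). Since it suffices to prove tightness of the lifted contact structure, I may as well assume $M$ is simply connected and $R$ generates an $S^1$-action. The action is automatically locally free (the orbits are circles, so no point is fixed and stabilisers are finite), so the quotient $B := M/S^1$ is a closed (or open) $2$-orbifold and $M \to B$ is a principal $S^1$-orbibundle.

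Next, I would observe that $\alpha$ is an $S^1$-invariant connection $1$-form on this orbibundle whose curvature $\drm\alpha$ descends to an area form on $B$ (this is exactly the contact condition $\alpha \wedge \drm\alpha \neq 0$, together with $i_R\drm\alpha = 0$ and $\alpha(R) = 1$). In other words, $(M,\xi)$ is a Boothby–Wang orbibundle over the symplectic (area) orbifold $(B,\drm\alpha)$. For such contact structures universal tightness is known: when $B$ is a closed surface this is classical (the prequantisation bundles over surfaces are universally tight, see e.g.\ \cite{geiges:2008}); in the orbifold case one can either cite the corresponding classification of contact structures on Seifert fibred $3$-manifolds transverse to the fibres — these are exactly the ones with maximal twisting number and are universally tight — or, since we have reduced to $M$ simply connected, use that a simply connected closed Seifert fibred space is $S^3$ and the contact structure is the standard one. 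If $B$ is open the total space is an open subset and tightness passes to it.

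**The hard part will be** the orbifold bookkeeping: making sure Wadsley's theorem genuinely yields a uniform period after lifting, handling the possible orbifold singularities of $B$ coming from short orbits, and citing (or sketching) the right universal tightness statement for Boothby–Wang orbibundles rather than just the manifold case. A cleaner alternative, which I would pursue if available within this paper, is to reduce to Theorem \ref{thm:freereebflow}: remove the singular fibres (a codimension-$2$ set) to get a free proper $\R$- (or $S^1$-) action on the complement, conclude tightness there, and then argue that an overtwisted disc, being compact, can be perturbed off the finitely many singular fibres — but this last perturbation step is itself delicate, so the Boothby–Wang route is probably safer. Either way, the one genuinely new input beyond the earlier sections is the reduction to a periodic (circle-action) situation via Wadsley, after which the result is essentially a citation.
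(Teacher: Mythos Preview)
Your proposal has a genuine gap at the lifting step. You write ``I may as well assume $M$ is simply connected and $R$ generates an $S^1$-action,'' but these two assumptions are generally incompatible: when you pass to the universal cover $\widetilde{M}$, the closed Reeb orbits typically unwind to lines, so the lifted flow no longer defines an $S^1$-action. Concretely, for the vast majority of closed Seifert fibred $3$-manifolds the universal cover is $\R^3$ and the lifted fibres are copies of $\R$, not circles. Wadsley's theorem gives you a periodic flow on $M$, not on $\widetilde{M}$; treating this as ``orbifold bookkeeping'' understates the issue, since your Boothby--Wang argument simply does not apply on $\widetilde{M}$ in that case.

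The paper's proof fixes exactly this by a case split on the universal cover. One first observes that $M$ is Seifert fibred, so $\widetilde{M}$ is $S^3$, $\R^3$, or $S^2\times\R$; the last is excluded by Stokes' theorem. If $\widetilde{M}=S^3$, the lifted fibration is one of the standard Seifert fibrations of $S^3$, and an explicit Gray-stability argument identifies the contact structure with the standard tight one (this is close to what you sketch). If $\widetilde{M}=\R^3$, the lifted Reeb flow is free and proper --- with orbits $\R$, not $S^1$ --- and Theorem~\ref{thm:freereebflow} applies directly. Your ``cleaner alternative'' of invoking Theorem~\ref{thm:freereebflow} by deleting singular fibres downstairs and perturbing overtwisted discs off them is both unnecessary and unjustified (the perturbation step is not obviously possible); the correct use of Theorem~\ref{thm:freereebflow} is upstairs on $\R^3$, where no singular fibres remain and no perturbation is needed.
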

\begin{proof}
Let $R$ be a Reeb vector field of a contact form $\alpha$ defining $\xi$ such that $R$ has closed orbits only. In particular, $M$ is Seifert fibred, and the universal cover $\tilde{M}$ of $M$ is diffeomorphic to one of $S^3$, $\R^3$ or $S^2 \times \R$. Denote by $\tilde{\alpha}$ the pullback of $\alpha$ to $M$, and by $\tilde{R} = R_{\tilde{\alpha}}$ its Reeb vector field (given by the pullback of $R$). If $\tilde{M} = S^2 \times \R$, then the Seifert fibres of $M$ are covered by the $\R$-fibres (see \cite[Lemma 3.1]{scott:1983}). In particular, $\tilde{R}$ is transverse to $S^2 \times \{0\}$. But then $\drm \alpha$ restricts to an exact area form on the closed surface $S^2 \times \{0\}$, which contradicts Stokes' theorem. Hence we only have to consider the two cases $\tilde{M} = S^3$ and $\tilde{M} = \R^3$. 
\\[3pt]
\underline{\textbf{Case $\tilde{M} = S^3$}}: The vector field $\tilde{R}$ spans a Seifert fibration of $S^3$, whose fibres are given by the orbits of an $S^1$-action of the form
\[
\C^2 \supset S^3 \ni (z_1,z_2) \longmapsto \theta(z_1,z_2):= (e^{k_1\rmi \theta}z_1,e^{k_2 \rmi \theta} z_2), \quad \theta \in S^1 = \R / 2 \pi \Z,
\]
for some $k_1, k_2 \in \Z \setminus \{0\}$ (see \cite[Proposition 5.2]{geigeslange:2018}). It follows that $\tilde{R}$ is --- up to multiplication by a nonzero function --- equal the Reeb vector field of the contact form
\[
\alpha_{k_1,k_2} := \frac{1}{k_1}(x_1 \, \drm y_1 - y_1 \, \drm x_1) + \frac{1}{k_2}(x_2 \, \drm y_2 - y_2 \, \drm x_2),
\]
where $z_1 = x_1 + \rmi y_1$ and $z_2 = x_2 + \rmi y_2$.
At this point, it suffices to argue that $\xi_{k_1,k_2} := \ker \alpha_{k_1,k_2}$ defines a tight contact structure, since any two contact structures with orbit equivalent Reeb vector fields on a closed $3$-manifold are diffeomorphic \cite[Lemma 5]{becker:2023}. 
The contact structure $\xi_{k_1,k_2}$, however, is diffeomorphic to the standard (tight) one on $S^3$ --- indeed, after applying the diffeomorphisms $h^{\pm} \col (z_1,z_2)\mapsto(\pm z_1,\mp z_2)$, if necessary, we may assume that $k_1,k_2 > 0$. Then we apply Gray stability to the $1$-parameter family of contact forms
\[
\alpha_t := \frac{1-t(1-k_1)}{k_1}\left(x_1 \, \drm y_1 - y_1 \, \drm x_1\right) + \frac{1-t(1-k_2)}{k_2}(x_2 \, \drm y_2 - y_2 \, \drm x_2), \quad t \in [0,1]
\]
to see that $\xi_{k_1,k_2}$ is diffeomorphic to $\ker \alpha_1$, the standard contact structure on $S^3$.
\\[3pt]
\underline{\textbf{Case $\tilde{M} = \R^3$:}} In this case, $\tilde{M}$ is the total space of a principal (hence trivial) $\R$-bundle whose fibres are spanned by $\tilde{R}$. In particular, the flow of $\tilde{R}$ defines a free and proper $\R$-action. Thus, $\ker \tilde{\alpha}$ is tight by Theorem \ref{thm:freereebflow}. \qedhere
\end{proof}

With the help of Proposition \ref{prop:periodictight}, we are able to prove the following.
\begin{thm}\label{prop:killingtight}
Let $(M,\xi)$ be a closed contact $3$-manifold admitting a Reeb flow which is Killing for some metric on $M$. Then $\xi$ is universally tight.
\end{thm}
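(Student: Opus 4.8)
The plan is to reduce the general case to the two situations already handled---periodic Reeb flows (Proposition \ref{prop:periodictight}) and free-and-proper $\R$-actions (Theorem \ref{thm:freereebflow})---via the structure theory of Killing fields on closed $3$-manifolds. Let $\alpha$ be a contact form defining $\xi$ whose Reeb field $R$ is Killing for some metric $g$ on $M$. Since $M$ is closed, the closure in $\isom(M,g)$ of the one-parameter group generated by $R$ is a compact abelian Lie group, hence a torus $T^k$, and it acts on $M$ by isometries preserving $\alpha$ (the flow of $R$ preserves $\alpha$ by Lemma \ref{lem:geodesiccharact}, and this property passes to the closure). The first case is when $R$ already has a closed orbit through some point: then, because an isometric flow on a closed manifold has all orbits of the same period behaviour (the orbit closures are the $T^k$-orbits and $R$ sits inside), one shows every orbit of $R$ is periodic, and Proposition \ref{prop:periodictight} applies directly.

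The remaining case is that $R$ has no closed orbit. Here the torus $T^k$ closing up the flow has $k \geq 2$, and I would argue that the $\R$-action generated by $R$ is then free and proper: freeness holds because a non-free point would have a nontrivial isotropy circle inside $T^k$ whose intersection with the (dense, nonclosed) line $\R \hookrightarrow T^k$ forces the $R$-orbit through that point to be periodic, a contradiction; properness holds automatically for a smooth action of $\R$ on a manifold whose orbits are all closed submanifolds diffeomorphic to $\R$ (equivalently, since the orbit map is injective and $M$ is compact, one checks the action has no "wrapping"). Once the $\R$-action is free and proper, Theorem \ref{thm:freereebflow} gives that $\tilde M = \R^3$ and $\xi$ is universally tight---but this contradicts $M$ being closed, so in fact this case cannot occur, and we are always in the periodic case.

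I should be a little more careful: it is cleaner to observe that on a closed manifold a free $\R$-action by a flow preserving a metric need not be proper in general, so rather than proving properness directly I would invoke the Reeb-field-preserves-$\alpha$ structure. Concretely, if $R$ has no periodic orbit, the generic $T^k$-orbit is a $2$-torus (it cannot be higher-dimensional in a $3$-manifold, and if it were a circle then $R$, lying in a $1$-dimensional orbit, would be periodic); but $\drm\alpha$ restricted to such an invariant $2$-torus is an invariant, hence exact, area form (exact since $\alpha$ restricts to a closed $1$-form there and $R \in \ker \drm\alpha|_{T^2}$ would make it degenerate---one checks $\drm\alpha$ is nonzero on the torus using that $\alpha\wedge\drm\alpha \neq 0$ and $R$ is transverse to it), contradicting Stokes' theorem. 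Hence $R$ must have a periodic orbit, and by the isometry argument above all orbits are periodic, so Proposition \ref{prop:periodictight} finishes the proof.

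**Main obstacle.** The delicate point is the dichotomy "either $R$ has all orbits periodic or the generic torus-orbit is $2$-dimensional, and the latter is impossible": making rigorous the claim that an isometric flow on a closed $3$-manifold with one periodic orbit has all orbits periodic, and ruling out the $2$-torus case using the contact condition, is where the real content lies. The torus-orbit computation---showing $\drm\alpha$ restricts to a nondegenerate, necessarily exact, area form on any invariant $2$-torus transverse to $R$---is the crux; everything else is bookkeeping with Lie group closures.
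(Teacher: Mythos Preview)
Your overall setup---taking the closure of the Reeb flow in $\isom(M,g)$ to obtain a torus $T^k$ acting by strict contactomorphisms---is correct and matches the paper's starting point. However, the dichotomy you try to run afterwards has a genuine gap.

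The claim in your Case~1, that an isometric Reeb flow with \emph{one} closed orbit must have \emph{all} orbits closed, is false. A standard counterexample: on $S^3 \subset \C^2$ take an ellipsoid-type contact form $\alpha_\lambda = (x_1\,\drm y_1 - y_1\,\drm x_1) + \lambda(x_2\,\drm y_2 - y_2\,\drm x_2)$ with $\lambda > 0$ irrational. Its Reeb vector field is Killing for the round metric, the orbits through $(1,0)$ and $(0,1)$ are closed circles (these are the singular orbits of the $T^2$-action, with nontrivial isotropy), yet the generic orbit is an irrational line on a $2$-torus, hence not closed. So your reduction to Proposition~\ref{prop:periodictight} via this route does not go through.

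Your Case~2 argument is also confused: the $T^k$-orbit containing a non-closed $R$-orbit is a $2$-torus to which $R$ is \emph{tangent}, not transverse (the torus is the closure of an $R$-orbit). Hence $i_R\,\drm\alpha = 0$ forces $\restr{\drm\alpha}{T^2}$ to be degenerate, and there is no Stokes contradiction. In fact such invariant $2$-tori do exist (again, the ellipsoid example), so this case cannot be ruled out.

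The paper's proof avoids both problems with a single perturbation trick you did not hit on: inside the Lie algebra $\mathfrak{h}$ of the torus $H = \overline{\{\phi_t\}}$, the set of elements generating \emph{closed} one-parameter subgroups (i.e.\ rational directions) is dense. Choosing such an $r'$ within distance $1/2$ of $r$ (in sup norm) gives a nowhere vanishing periodic Killing field $R'$ with $\alpha(R') > 0$. Since $H$ acts by strict contactomorphisms, $L_{R'}\alpha = 0$, and a short computation shows $R'$ is the Reeb vector field of the rescaled form $\alpha' := \alpha / \alpha(R')$, which defines the \emph{same} $\xi$. Now Proposition~\ref{prop:periodictight} applies directly to $(\xi,\alpha')$. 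The key idea you were missing is that one does not need to analyse the orbit structure of $R$ itself at all; one simply replaces $R$ by a nearby periodic Reeb field for the same contact structure.
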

\begin{rmk}
    A contact manifold admitting an isometric Reeb flow is also called \textbf{$R$-contact manifold}, cf.\ \cite{rukimbira:1993}.
\end{rmk}
Before proving Theorem \ref{prop:killingtight}, let us set up some notation. Given a closed Riemannian 3-manifold $M$, denote by $\mathfrak{K}(M)$ the space of Killing vector fields on $M$ equipped with the supremum norm given by $||X||_{\infty} := \sup_{p\in M} |X(p)|$ for $X \in \mathfrak{K}(M)$. By the classical Myers--Steenrod theorem, the isometry group $\isom(M)$ of $M$ is a compact Lie group. Denote by $\mathfrak{isom}(M)$ its Lie algebra. Then there is a natural isomorphism
\[
\mathfrak{isom}(M) \overset{\cong}{\longrightarrow} \mathfrak{K}(M).
\]
We pull back the norm on $\mathfrak{K}(M)$ via this isomorphism to obtain a norm on $\mathfrak{isom}(M)$, which we will denote by $||\cdot||_{\infty}$ again.
\begin{proof}[Proof of Theorem \ref{prop:killingtight}]
Let $R$ be an isometric Reeb vector field of some contact form $\alpha$ defining $\xi$. Without loss of generality, we may assume that $R$ is orthogonal to $\xi$. Indeed, if that were not the case, we simply replace the Riemannian metric $g$ by a new metric $\tilde{g}$, defined by $\restr{\tilde{g}}{\xi} = \restr{g}{\xi}$, $\tilde{g}(R,R) = 1$ and $\tilde{g}(R,v) = 0$ for all $v \in \xi$. Then $R$ is still Killing with respect to $\tilde{g}$, as the flow of $R$ preserves $\xi$. Furthermore, $R$ is geodesic with respect to $\tilde{g}$ (see Example \ref{ex:geodesicvf} (ii)), and equal to the Reeb vector field of the contact form $\alpha = i_R \tilde{g}$ by Remark \ref{rmk:wholeorbitnotcontact} (ii). We continue to work with this new metric and call it $g$ again. The flow of $R$ defines a $1$-parameter subgroup $G \subset \isom(M)$ of the (closed) isometry group of $M$. The closure $H:= \overline{G}$ of $G$ is abelian and hence isomorphic to a torus. Note that $H$ acts on $(M,\alpha)$ by strict contactomorphisms: Indeed, since $H$ is abelian, we have that $\phi_t \circ h = h \circ \phi_t$ for every $h \in H$, where $\phi_t$ denotes the time-$t$ flow of $R$. Differentiating this equation with respect to $t$ yields $h_* R = R$. Using the fact that $\alpha = i_R g$, a simple computation then shows that $h^* \alpha = \alpha$ (see \cite[Proposition 1]{banyagarukimbira:1995}).
Via the identification of $\mathfrak{K}(M)$ with $\mathfrak{isom}(M)$, the vector field $R$ corresponds to an element $r \in \mathfrak{h} \subset \mathfrak{isom}(M)$, where $\mathfrak{h}$ denotes the Lie algebra of $H$. Choose an element $r^\prime \in B_{1/2}(r) \cap \mathfrak{h}$ corresponding to a periodic vector field $R^\prime$, where $B_{1/2}(r)$ denotes the open ball of radius $1/2$ with respect to the norm $||\cdot||_{\infty}$ on $\mathfrak{isom}(M)$. This is possible since the set of such elements is dense in $\mathfrak{h}$. Since $||R-R^\prime||_{\infty} < 1/2$ and $|R| \equiv 1$, it follows that $R^\prime$ is nowhere vanishing and $\alpha(R^\prime) > 0$. Now consider the contact form $\alpha^\prime := (1/(\alpha(R^\prime)) \alpha$ defining $\xi$. Then 
\[
\drm \alpha^\prime = \frac{1}{\alpha(R^\prime)} \drm \alpha - \frac{1}{\alpha(R^\prime)^2} \drm (i_{R^\prime} \alpha) \wedge \alpha = \frac{1}{\alpha(R^\prime)} \drm \alpha + \frac{1}{\alpha(R^\prime)^2} (i_{R^\prime} \drm \alpha) \wedge \alpha,
\]
where we used the fact that $L_{R^\prime} \alpha = 0$. Hence $i_{R^\prime} \drm \alpha^\prime = 0$, and, of course, $\alpha^\prime(R^\prime) = 1$. Thus, $R^\prime$ is the Reeb vector field of $\alpha^\prime$, and therefore, $\xi$ is universally tight by Proposition \ref{prop:periodictight}. \qedhere
\end{proof}

Our next goal is to give a complete characterisation of isometric Reeb vector fields on closed $3$-manifolds. To do so, we need some preparation. Given a nowhere vanishing vector field $X$ on some manifold $M$ (of arbitrary dimension), consider the one-dimensional foliation $\fol_X$ spanned by $X$. A $k$-form $\eta$ is called \textbf{basic} with respect to $\fol_X$ if $i_X \eta = 0$ and $L_X \eta = 0$ (or, equivalently, $i_X \drm \eta = 0$). Denote by $\Omega^k(\fol_X)$ the space of all basic $k$-forms. Then, since the exterior derivative preserves basic forms, there is a subcomplex
\[
\ldots \xrightarrow{\drm} \Omega_B^{k-1}(\mc{F}) \xrightarrow{\drm} \Omega_B^k(\mc{F}) \xrightarrow{\drm} \Omega_B^{k+1}(\mc{F}) \xrightarrow{\drm} \ldots
\]
of the de Rham complex of $M$. Its cohomology is called \textbf{basic cohomology} of $\fol_X$ and denoted by $H_B^{\bullet}(\fol_X)$. Now, if $X$ is a geodesible vector field and $\alpha$ a characteristic 1-form for $X$, then the 2-form $\drm \alpha$ is basic and thus defines a basic cohomology class $[\drm \alpha] \in H_B^2(\fol_X)$. It is a simple exercise to show that this class does not depend on the choice of $\alpha$. Hence one can define the \textbf{Euler class} of $X$ by $e_X := [\drm \alpha] \in H^2_B(\fol_X)$ (cf.\ \cite{geiges:2022}). Then we have the following criterion for the `Reebability' of a geodesible field in terms of its Euler class due to Geiges.
\begin{prop}[{\cite[Proposition 5.5.]{geiges:2022}}]\label{prop:geiges}
    A geodesible vector field $X$ on a $(2n+1)$-dimensional manifold can be realised by the Reeb vector field of a contact form if and only if $e_X$ has an odd-symplectic representative, i.e.\ if there is a closed basic $2$-form $\omega \in \Omega^2_B(\fol_X)$ such that $[\omega] = e_X$ and $\omega^n \neq 0$. \qed
\end{prop}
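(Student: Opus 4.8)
The plan is to treat the two implications separately: the forward one is essentially a tautology, while the converse needs only a small cohomological adjustment followed by a pointwise linear-algebra check. For the forward direction, if $X = R_\alpha$ for a contact form $\alpha$, then by definition $\alpha(X)=1$ and $i_X\drm\alpha=0$, so $\alpha$ is a characteristic $1$-form for $X$; hence $\omega:=\drm\alpha$ is a closed basic $2$-form with $[\omega]=e_X$, and it is odd-symplectic because $\alpha\wedge(\drm\alpha)^n\neq 0$ forces $(\drm\alpha)^n$ to be nowhere zero.

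For the converse, suppose $\omega\in\Omega^2_B(\fol_X)$ is closed with $[\omega]=e_X$ and $\omega^n\neq 0$ everywhere. Since $X$ is geodesible, Corollary~\ref{cor:geodesiblecharact} provides a characteristic $1$-form $\alpha_0$, i.e.\ $\alpha_0(X)=1$ and $i_X\drm\alpha_0=0$; then $\drm\alpha_0$ is basic and, by the definition of the Euler class recalled above, $[\drm\alpha_0]=e_X$ in $H_B^2(\fol_X)$. Thus $\omega-\drm\alpha_0$ is a basic closed $2$-form representing the zero basic cohomology class, so $\omega-\drm\alpha_0=\drm\lambda$ for some basic $1$-form $\lambda\in\Omega^1_B(\fol_X)$. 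Setting $\alpha:=\alpha_0+\lambda$, the condition $i_X\lambda=0$ gives $\alpha(X)=1$, while $\drm\alpha=\omega$ gives $i_X\drm\alpha=i_X\omega=0$; hence $\alpha$ is again characteristic for $X$, and $X$ will be its Reeb vector field as soon as $\alpha$ is shown to be contact.

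It remains to check $\alpha\wedge\omega^n\neq 0$ at every $p\in M$. Since $\alpha_p\neq 0$, extend $\alpha_p$ to a coframe $\alpha_p,\eta^1,\dots,\eta^{2n}$ of $T^*_pM$. From $i_X\omega=0$ we get $i_X\omega^n = n\,(i_X\omega)\wedge\omega^{n-1}=0$, so in the expansion of $\omega_p^n$ in the associated basis of $2n$-forms the only surviving term is the one without an $\alpha_p$-factor, i.e.\ $\omega_p^n = c\,\eta^1\wedge\dots\wedge\eta^{2n}$ with $c\neq 0$ because $\omega_p^n\neq 0$. Hence $\alpha_p\wedge\omega_p^n = c\,\alpha_p\wedge\eta^1\wedge\dots\wedge\eta^{2n}\neq 0$, so $\alpha$ is a contact form, and by $\alpha(X)=1$ and $i_X\drm\alpha=0$ its Reeb vector field is $X$.

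I do not expect a serious obstacle: the argument is short once the objects are arranged correctly. The step deserving a little care is the final linear-algebra observation that $i_X\omega=0$ together with $\omega^n\neq 0$ forces $\alpha\wedge\omega^n\neq 0$ --- this is exactly what makes the odd-symplectic condition the right one (and dually it shows a basic $2$-form vanishing to top power somewhere can never be $\drm$ of a contact form with Reeb field $X$). One should also be mindful that basic cohomology is computed inside the subcomplex of basic forms, so the primitive $\lambda$ produced above is itself basic, which is precisely what preserves the normalisation $\alpha(X)=1$ upon replacing $\alpha_0$ by $\alpha_0+\lambda$.
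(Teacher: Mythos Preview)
Your argument is correct. Note, however, that the paper does not supply its own proof of this proposition: it is quoted verbatim from \cite[Proposition~5.5]{geiges:2022} and closed with a \qedsymbol, so there is no in-paper proof to compare against. That said, your proof is the natural one and matches the standard argument: adjust a characteristic $1$-form by a basic primitive so that its exterior derivative becomes the given odd-symplectic representative, then observe that $i_X\omega=0$ and $\omega^n\neq 0$ together force $\alpha\wedge\omega^n\neq 0$. Nothing is missing.
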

Furthermore, if $M$ is a closed $3$-manifold, one can define the \textbf{volume} of $X$ as
\[
\vol_X := \int_M \alpha \wedge \drm \alpha,
\]
where $\alpha$ is any characteristic $1$-form. Again, this quantity only depends on the vector field $X$, and not on the choice of characteristic 1-form; this follows from the identity
\[
\alpha \wedge \drm \alpha - \beta \wedge \drm \beta = (\alpha - \beta)\wedge (\drm \alpha + \drm \beta) + \drm(\alpha \wedge \beta)
\]
that holds for any two $1$-forms $\alpha$ and $\beta$, see \cite[Proposition 1.2]{geiges:2022}. Note that if $X$ is the Reeb vector field of a contact form, then $\vol_X \neq 0$ as $\alpha \wedge \drm \alpha$ defines a volume form. It turns out that for Killing vector fields, the converse is also true.
\begin{prop}\label{prop:killingreeb}
    Let $X$ be a nowhere vanishing Killing vector field on a closed orientable Riemannian $3$-manifold. Then $X$ is the Reeb vector field of a contact form if and only if $\vol_X \neq 0$.
\end{prop}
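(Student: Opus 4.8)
The plan is to derive the non-trivial implication from the criterion of Geiges (Proposition~\ref{prop:geiges}) together with Poincaré duality for basic cohomology; the converse implication is immediate, since if $X=R_\alpha$ for a contact form $\alpha$ then $\alpha$ is a characteristic $1$-form for $X$ and $\alpha\wedge\drm\alpha$ is a volume form, so $\vol_X\neq0$.

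So assume $\vol_X\neq0$. As $\vol_X$ is computed from an arbitrary characteristic $1$-form and is in particular metric-independent, I first replace $g$ by $g/|X|^2$; by Example~\ref{ex:geodesicvf}~(ii), $X$ remains Killing and now has unit length. Put $\alpha:=i_Xg$, a characteristic $1$-form for $X$. Since the flow of $X$ is isometric, the one-dimensional foliation $\fol_X$ is Riemannian ($g$ is bundle-like: for projectable sections $V,W$ of $\ker\alpha$ one has $X\langle V,W\rangle=\langle[X,V],W\rangle+\langle V,[X,W]\rangle=0$ because $X$ is Killing and $[X,V],[X,W]\in\Gamma(T\fol_X)$), and it is taut, because its leaves are geodesics and hence minimal. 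Let $\nu$ be the transverse Riemannian area form, i.e.\ the unique $2$-form with $i_X\nu=0$ whose restriction to $\ker\alpha$ is the area form of $g|_{\ker\alpha}$ for the co-orientation determined by $X$. Then $\nu$ is basic: $i_X\nu=0$ by construction, and $L_X\nu=0$ because the flow of $X$ is isometric and preserves $\ker\alpha$ and the relevant orientations. Moreover $\nu$ is closed: $\drm\nu$ is again basic, so $i_X\drm\nu=0$, while $i_X$ is injective on $3$-forms as $X$ is nowhere zero, whence $\drm\nu=0$. Thus $[\nu]\in H^2_B(\fol_X)$, the $2$-form $\nu$ is nowhere vanishing on $\ker\alpha$, and $\int_M\alpha\wedge\nu=\operatorname{vol}(M,g)>0$.

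Now I apply Poincaré duality for the basic cohomology of a taut, transversely oriented Riemannian foliation on a closed manifold (El Kacimi-Alaoui, Masa): it gives $H^2_B(\fol_X)\cong H^0_B(\fol_X)=\R$, so $H^2_B(\fol_X)$ is one-dimensional. The linear map $H^2_B(\fol_X)\to\R$, $[\omega]\mapsto\int_M\alpha\wedge\omega$, is well defined --- if $\eta$ is a basic $1$-form then $\int_M\alpha\wedge\drm\eta=\int_M\drm\alpha\wedge\eta=0$, because $\drm\alpha\wedge\eta$ is a basic $3$-form --- and it is nonzero on $[\nu]$ (in particular $[\nu]\neq0$), hence an isomorphism. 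Therefore $e_X=c\,[\nu]$ for a unique $c\in\R$, and $\vol_X=\int_M\alpha\wedge\drm\alpha=c\int_M\alpha\wedge\nu=c\operatorname{vol}(M,g)$. Since $\vol_X\neq0$ we get $c\neq0$, so $c\nu$ is a closed basic $2$-form representing $e_X$ that is nowhere vanishing on $\ker\alpha$, i.e.\ an odd-symplectic representative of $e_X$. By Proposition~\ref{prop:geiges}, $X$ is the Reeb vector field of a contact form.

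The main obstacle is the one-dimensionality of $H^2_B(\fol_X)$ --- equivalently, that a closed basic $2$-form $\omega$ with $\int_M\alpha\wedge\omega=0$ is the basic differential of a basic $1$-form. By hand this would require solving a transverse Poisson-type equation on an arbitrary Killing foliation, which seems hard; the efficient route is to quote the basic-cohomology Poincaré duality theorem, whose hypotheses (Riemannian, transversely oriented, taut) are verified above. A more elementary but longer alternative would be to split into cases according to the orbit type of the torus $\overline{G}\subset\isom(M)$ generated by the flow of $X$ --- Seifert fibrations when the orbits are circles, effective $T^2$-actions on the $3$-manifold otherwise --- but I would avoid this in favour of the foliation-theoretic argument.
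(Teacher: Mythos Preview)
Your proof is correct and follows essentially the same route as the paper: both arguments hinge on the fact that $H^2_B(\fol_X)\cong\R$ for a Killing flow (the paper cites Tondeur, you invoke basic Poincar\'e duality), then observe that the transverse area form $\nu=i_X\mu$ gives a nontrivial odd-symplectic class, so $e_X=c[\nu]$ with $c\neq0$, and conclude via Proposition~\ref{prop:geiges}. Your explicit use of the integration pairing $[\omega]\mapsto\int_M\alpha\wedge\omega$ to compute $c=\vol_X/\operatorname{vol}(M,g)$ is a nice touch, but the overall strategy is the same as the paper's.
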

\begin{proof}
    Let $\fol_X$ be the one-dimensional foliation spanned by $X$. Since $\vol_X \neq 0$, the Euler class $e_X$ of $X$ must be nontrivial. Indeed, if $e_X = 0$, then $\drm \alpha = \drm \beta$ for some basic $1$-form $\beta$. Hence $\beta \wedge \drm \alpha = 0$ (which can be seen taking the interior product with $X$). Thus
    \[
\vol_X = \int_M \alpha \wedge \drm \alpha = \int_M \alpha \wedge \drm \beta = -\int_M \beta \wedge \drm \alpha = 0.
    \]
Now choose an orientation for $M$, and consider the Riemannian volume form $\mu$ which is preserved by the flow of $X$. Then the $2$-form $\omega := i_X \mu$ is basic, so it defines a basic cohomology class $[\omega] \in H^2_B(\fol_X)$. Applying the same reasoning as for $e_X$, we see that $[\omega]$ is nontrivial. Furthermore, since $X$ is Killing we have that $H^2_B(\fol_X) \cong \R$ (see \cite[(6.5)]{tondeur:1997}). These two observations imply that $H^2_B(\fol_X) = \{c [\omega] \col c \in \R \}$, hence $e_X = c [\omega] = [c \, \omega]$ for some $c \in \R \setminus \{0\}$. But $c \, \omega$ restricts to a symplectic form on $X^\perp$, hence $X$ is Reeb by Proposition \ref{prop:geiges}.\qedhere
\end{proof}
As a consequence of Proposition \ref{prop:killingreeb}, we recover the following result by Kegel and Lange.
\begin{cor}[{\cite[Theorem 1.4]{kegellange:2021}}]\label{cor:kegellange}
    Consider a Seifert fibration $\pi \col M \to B$ with Euler number $e$. Then there is a contact form whose Reeb vector field spans the fibres of $\pi$ if and only if $e \neq 0$.
\end{cor}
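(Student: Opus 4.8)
The plan is to deduce Corollary \ref{cor:kegellange} from Proposition \ref{prop:killingreeb} by realising the Seifert fibration as the orbit foliation of a nowhere vanishing Killing vector field and then relating the nonvanishing of $\vol_X$ to the nonvanishing of the Euler number $e$. First I would recall that a Seifert fibration $\pi\col M\to B$ of a closed oriented $3$-manifold can be presented as the orbit foliation of a locally free $S^1$-action (possibly after reversing orientation), and by Example \ref{ex:geodesicvf} (iii) the generating vector field $X$ of this action is Killing for a suitable Riemannian metric on $M$, after averaging an arbitrary metric over $S^1$. In particular $X$ is a nowhere vanishing Killing vector field whose orbits are exactly the fibres of $\pi$, so Proposition \ref{prop:killingreeb} applies: $X$ is the Reeb vector field of a contact form defining some contact structure if and only if $\vol_X\neq 0$. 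Since rescaling $X$ by a positive function (which is what happens when one passes between an orbit-equivalent Reeb field and the $S^1$-generator) does not change whether the orbits are Reeb, it suffices to show that $\vol_X\neq 0$ is equivalent to $e\neq 0$.

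Next I would identify $\vol_X$ with (a nonzero multiple of) the Euler number $e$. The key observation is that a characteristic $1$-form $\alpha$ for $X$ is precisely a connection $1$-form for the Seifert $S^1$-bundle (in the orbifold sense), so $\drm\alpha$ is the pullback to $M$ of a curvature form representing the orbifold Euler class, and
\[
\vol_X=\int_M\alpha\wedge\drm\alpha = \Big(\int_{S^1}\alpha\Big)\cdot\int_B\drm\alpha_{\mathrm{orb}} = c\cdot e
\]
for a positive constant $c$ coming from the length of the generic fibre, by integration along the fibre (Fubini for the fibration, with the orbifold points contributing the fractional terms that together sum to $e$). Thus $\vol_X\neq 0$ exactly when $e\neq 0$, and combining with the previous paragraph gives the corollary. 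Alternatively, if one wishes to avoid orbifold integration, one can argue more directly at the level of basic cohomology: for a Seifert fibration $H^2_B(\fol_X)\cong\R$ is generated by $[\drm\alpha]=e_X$, and $e_X$ vanishes if and only if $e=0$, while the computation in the proof of Proposition \ref{prop:killingreeb} shows $\vol_X=0$ forces $e_X=0$ and conversely $e_X\neq 0$ together with $H^2_B\cong\R$ forces $[\drm\alpha]$ to be a nonzero multiple of $[i_X\mu]$, whence $\vol_X\neq 0$.

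The main obstacle is the bookkeeping around the orbifold/Seifert-invariant normalisation: one must be careful that the quantity called "Euler number $e$" in \cite{kegellange:2021} agrees, up to a nonzero universal factor, with the basic cohomology invariant $e_X$ used here, and that the multiple singular fibres do not introduce a sign or a vanishing that the smooth picture would miss. Once the dictionary "$e\neq 0\iff e_X\neq 0\iff \vol_X\neq 0$" is in place, the rest is a direct application of Proposition \ref{prop:killingreeb}. In writing this up I would state the identification of Seifert fibres with orbits of a Killing field as the first lemma-free step, then quote the $H^2_B(\fol_X)\cong\R$ computation, and finally invoke Proposition \ref{prop:killingreeb}; this keeps the proof to a few lines and isolates the only genuinely content-bearing claim, namely $\vol_X\neq 0\Leftrightarrow e\neq 0$.
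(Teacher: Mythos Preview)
Your proposal is correct and follows essentially the same approach as the paper: realise the fibres as orbits of a Killing $S^1$-action, invoke Proposition~\ref{prop:killingreeb} to reduce to $\vol_X\neq 0$, and then identify $\vol_X$ with (a nonzero multiple of) the Euler number. The only difference is that where you sketch the identification $\vol_X\neq 0\Leftrightarrow e\neq 0$ via orbifold integration or basic cohomology, the paper simply normalises $X$ so that regular fibres have period~$1$ and quotes the exact identity $\vol_X=-e$ from \cite[Corollary~6.3]{geiges:2022}.
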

\begin{proof}
    Choose a vector field $X$ defining the Seifert fibration, such that the regular fibres have minimal period $1$. Then the flow of $X$ defines an $S^1$-action on $M$, hence there is a Riemannian metric on $M$ for which $X$ is Killing (see Example \ref{ex:geodesicvf} (ii)). In particular, by Proposition \ref{prop:killingreeb}, $X$ is Reeb if and only if $\vol_X \neq 0$. The claim then follows from the identity $\vol_X = -e$ (see \cite[Corollary 6.3]{geiges:2022}).
\end{proof}

We conclude with the following classification of closed $3$-dimensional $R$-contact manifolds (see, for example, \cite{scott:1983} for definitions of the Thurston geometries).
\begin{cor}\label{thm:classification}
    Let $M$ be a closed $3$-dimensional $R$-contact manifold. Then $M$ is diffeomorphic to $\tilde{M} / \Gamma$, where $\tilde{M}$ is one of the three Thurston geometries $S^3$, $\nil$ and $\sltil$, and $\Gamma$ is a discrete subgroup of the isometry group of $\tilde{M}$ acting properly discontinuously on $\tilde{M}$.
\end{cor}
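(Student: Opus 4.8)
The plan is to reduce the classification to a statement about Seifert fibered $3$-manifolds and then quote the classification of the latter.

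First I would set things up exactly as in the proof of Theorem~\ref{prop:killingtight}: let $\alpha$ be a contact form defining $\xi$ whose Reeb field $R$ is Killing for some metric $g$ on $M$, and assume (after modifying $g$ as there) that $|R|\equiv 1$ and $R\perp\xi$. The closure $H:=\overline{G}$ of the flow $G$ of $R$ inside $\isom(M)$ is then a torus, and inside its Lie algebra $\mathfrak h$ the elements spanning a circle subgroup of $H$ form a dense subset. Picking such an element $R'\in\mathfrak h$ close enough to $R$, the field $R'$ is nowhere vanishing with $\alpha(R')>0$, and $\alpha':=(1/\alpha(R'))\,\alpha$ is a contact form defining $\xi$ with Reeb vector field $R'$. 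Thus the flow of $R'$ is an $S^1$-action on $M$ without fixed points, i.e.\ $M$ is a closed orientable Seifert fibered $3$-manifold, and this Seifert fibration is spanned by the Reeb vector field $R'$ of $\alpha'$.

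Next I would apply Corollary~\ref{cor:kegellange}: since the Seifert fibration just obtained is realised by a Reeb vector field, its Euler number $e$ is nonzero. It then remains to invoke the classification of Seifert fibered $3$-manifolds (see \cite{scott:1983}): every closed orientable Seifert fibered $3$-manifold is geometric, with universal cover isometric to one of the six model geometries $S^3$, $\E^3$, $S^2\times\R$, $\nil$, $\HH^2\times\R$, $\sltil$ and deck transformation group $\Gamma$ acting properly discontinuously by isometries; moreover the geometries $\E^3$, $S^2\times\R$, $\HH^2\times\R$ are precisely those occurring for Seifert fibrations of Euler number $0$, while $S^3$, $\nil$, $\sltil$ are those with $e\neq 0$ (this is read off from the dependence of the geometry on the pair $(\chi^{\mathrm{orb}},e)$, where $\chi^{\mathrm{orb}}$ denotes the Euler characteristic of the base orbifold). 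Since our fibration has $e\neq 0$, the model geometry $\tilde M$ of $M$ is one of $S^3$, $\nil$, $\sltil$, and consequently $M\cong\tilde M/\Gamma$ with $\Gamma<\isom(\tilde M)$ discrete and acting properly discontinuously.

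The part requiring the most care will not be a single hard computation but the passage to the periodic Reeb field and the bookkeeping around the Euler number: one must ensure that $R'$ genuinely generates an $S^1$-action (so that an honest Seifert fibration with a well-defined Euler number is obtained), and one must correctly match the vanishing or non-vanishing of $e$ with the Thurston geometry. In particular, it is exactly the input $e\neq 0$ coming from Kegel--Lange (Corollary~\ref{cor:kegellange}) that rules out the flat, $S^2\times\R$ and $\HH^2\times\R$ cases and pins down the three geometries in the statement.
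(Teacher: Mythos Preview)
Your proposal is correct and follows essentially the same route as the paper: reduce to a periodic isometric Reeb field $R'$ as in Theorem~\ref{prop:killingtight}, obtain a Seifert fibration with $e\neq 0$, and then quote Scott's classification \cite[Theorem 5.3]{scott:1983}. The only cosmetic difference is that you deduce $e\neq 0$ by invoking Corollary~\ref{cor:kegellange}, whereas the paper argues directly via $0\neq \vol_{R'}=\tau^2 e$ from \cite[Corollary 6.3]{geiges:2022}; since Corollary~\ref{cor:kegellange} itself is proved using that same volume identity, the two arguments are effectively identical.
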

\begin{rmk}
    Note that we have the following hierachy of contact $3$-manifolds:
\[
\text{Sasakian} \subset \text{$K$-contact} \subset \text{$R$-contact}
\]
(see for example \cite{blair:2001} for the definitions of Sasakian and $K$-contact structures). In particular, Corollary \ref{thm:classification} recovers the classification of Sasakian $3$-manifolds by Geiges \cite{geiges:1997}. Moreover, it was shown in \cite{geiges:1997} that every manifold whose diffeomorphism type is as in Corollary \ref{thm:classification} admits a Sasakian structure; hence, for any closed $3$-manifold $M$, we have the following equivalent statements:
\begin{center}
    $M$ admits a Saskian structure $\Longleftrightarrow$ $M$ admits a $K$-contact structure\\
    $\Longleftrightarrow M$ admits an $R$-contact structure.

\end{center}
\end{rmk}
\begin{proof}[Proof of Corollary \ref{thm:classification}]
Let $R$ be an isometric Reeb vector field on $M$. As in the proof of Theorem \ref{prop:killingtight}, we find a nowhere vanishing isometric Reeb vector field $R^\prime$ whose flow defines an $S^1$-action on $M$. In particular, $M$ is Seifert fibred. Let $e$ be the Euler number of this Seifert fibration, and let $\tau >0$ denote the minimal period of the regular fibres. Then, by \cite[Corollary 6.3]{geiges:2022},
\[
0 \neq \vol_{R^\prime} = \tau^2 e,
\]
hence $e \neq 0$. The statement then follows from a classical result of Scott \cite[Theorem 5.3]{scott:1983} (see also \cite[Chapter 4]{becker:thesis}). \qedhere
\end{proof}

\end{document}